\numberwithin{equation}{section}
\newtheorem*{rep@theorem}{\rep@title}
\newcommand{\newreptheorem}[2]{%
\newenvironment{rep#1}[1]{%
 \def\rep@title{#2 \ref{##1}}%
 \begin{rep@theorem}}%
 {\end{rep@theorem}}}
\theoremstyle{theorem}
\newtheorem{thm}{Theorem}[section]
\newtheorem*{thm*}{Theorem}
\theoremstyle{definition}
\newtheorem{prop}[thm]{Proposition}
\newtheorem*{prop*}{Proposition}
\newtheorem{lem}[thm]{Lemma}
\newtheorem*{cor*}{Corollary}
\theoremstyle{remark}
\newtheorem{rem}[thm]{Remark}
\title{On Monge-Ampère volumes of direct images.} 
\author
{Siarhei Finski
}
\date{}
\newcommand{\imun} {\sqrt{-1}}
\newcommand{\comp}{\mathbb{C}}
\newcommand{\real}{\mathbb{R}}
\newcommand{\nat}{\mathbb{N}}
\newcommand{\spec}{{\rm{Spec}}}
\newcommand{\dist}{{\rm{dist}}}
\newcommand{\enmr}[1]{\text{End}{(#1)}}
\newcommand{\ccal}{\mathscr{C}}
\newcommand{\dbar}{ \overline{\partial} }
\newcommand{\rk}[1]{{\rm{rk}} ( #1 )}
\newcommand{\tr}[1]{{\rm{Tr}} \big[ #1 \big]}
\newcommand{\scal}[2]{\big< #1, #2 \big>}
\newcommand{\td}{{\rm{Td}}}
\newcommand{\ch}{{\rm{ch}}}
\let\origsection\section
\renewcommand\section{\@ifstar{\starsection}{\nostarsection}}
\newcommand\nostarsection[1]
\sectionprelude\origsection{#1}\sectionpostlude}
\newcommand\starsection[1]
\newcommand\sectionprelude{%
  \vspace{-10px}
}
\newcommand\sectionpostlude{%
  \vspace{-5px}
}
\newenvironment{sciabstract}{}
\begin{document} 
\maketitle

\begin{sciabstract}
  \textbf{Abstract.} This paper is devoted to the study of the asymptotics of Monge-Ampère volumes of direct images associated to high tensor powers of an ample line bundle. We study the leading term of this asymptotics and provide a classification of bundles which saturate the topological bound of Demailly on those volumes.
  In the special case of high symmetric powers of ample vector bundles, this provides a characterization of vector bundles admitting projectively flat Hermitian structures.

\end{sciabstract}

\pagestyle{fancy}
\lhead{}
\chead{On Monge-Ampère volumes of direct images}
\rhead{\thepage}
\cfoot{}

%\fancypagestyle{mypagestyle}{%
%  \fancyhf{}% Clear header/footer
%  \fancyhead[OC]{An Author}% Author on Odd page, Centred
%  \fancyhead[EC]{A titlesdfdsfdsfds}% Title on Even page, Centred
%  \fancyfoot[C]{\thepage}%
%  \renewcommand{\headrulewidth}{.4pt}% Header rule of .4pt
%}
%\pagestyle{mypagestyle}

\newcommand{\Addresses}{{% additional braces for segregating \footnotesize
  \bigskip
  \footnotesize
  \noindent \textsc{Siarhei Finski, Institut Fourier - Université Grenoble Alpes, France.}\par\nopagebreak
  \noindent  \textit{E-mail }: \texttt{finski.siarhei@gmail.com}.
}}

%\tableofcontents
%\vspace*{10px}
\section{Introduction}\label{sect_intro}
	
	In \cite{DemailluHYMGriff}, Demailly proposed an elliptic system of differential equations of Hermitian-Yang-Mills type for the curvature tensor of a vector bundle with an ample determinant. 
	This system of differential equations is designed so that the existence of a solution to it implies the existence of a dual Nakano positive Hermitian metric (see Section \ref{sect_pos_conc} for a precise definition) on the vector bundle.
	\par The first equation of this elliptic system is a determinantal equation for the curvature and it should be regarded as a vector-bundle version of the Monge-Ampère equation. Motivated by this and the theory of volumes of line bundles (which is based on the Monge-Ampère equation), Demailly in \cite[\S 3]{DemailluHYMGriff} introduced the Monge-Ampère volume, ${\rm{MAVol}}(F, h^F)$, of a dual Nakano positive Hermitian vector bundle $(F, h^F)$ on a compact complex manifold $Z$ of dimension $n$ as follows
	\begin{equation}\label{eq_defn_mavol}
		{\rm{MAVol}}(F, h^F) := \int_X \det{}_{TZ \otimes F^*}\big(  (\Theta^{F})^T / 2 \pi \big)^{1/\rk{F}},
	\end{equation}
	where $R^F := (\nabla^F)^2$ is the curvature of the Chern connection $\nabla^F$ of $(F, h^F)$, $\Theta^F := \imun R^F$, and the determinant should be interpreted as follows. 
	Fix $x \in Z$ and some basis $\partial z_1, \ldots, \partial z_n$ of $T^{1, 0}_x Z$. Denote the associated dual basis by $dz_1, \ldots, dz_n$. Then define
	\begin{multline}\label{eq_defn_det}
		\det{}_{TZ \otimes F^*}\big(  (\Theta^{F})^T/2 \pi \big)^{1/\rk{F}}(x)
		:=
		\det{} \big(  \Theta^{F}(\partial z_i, \overline{\partial z}_j)^T/2 \pi \big)^{1/\rk{F}} 
		\cdot
		\\
		\cdot n! \cdot \imun dz_1 \wedge d\overline{z}_1 \wedge \cdots \wedge \imun dz_n \wedge d\overline{z}_n,
	\end{multline}
	where the last determinant is viewed as a determinant of an operator acting on the space $(F^*)^{\oplus n}$.
	Note that this determinant is positive, since $(F, \nabla^F)$ is dual Nakano positive.
	\par 
	Clearly, ${\rm{MAVol}}(F, h^F)$ doesn't depend on $h^F$ if $F$ is a line bundle (and only in this case,  see for example \cite[Remark 3.3c)]{DemailluHYMGriff}), and in this case our normalization gives ${\rm{MAVol}}(F, h^F) = \int_Z c_1(F)^{n}$, where $c_1(F)$ is the first Chern class of $F$.
	\par The main goal of this paper is to study  ${\rm{MAVol}}(E_k, h^{E_k})$ for a sequence of Hermitian vector bundles $(E_k, h^{E_k})$, $k \in \nat$, obtained as direct images of high tensor powers of an ample line bundle. 
	\par More precisely, we consider a holomorphic proper submersion $\pi : Y \to B$ between compact complex manifolds and denote by $X$ its fiber. Let $(L, h^L)$ (resp. $(G, h^G)$) be a Hermitian line (resp. vector) bundle over $Y$. We suppose that $L$ is ample and the curvature of the Chern connection of $(L, h^L)$ is positive. Then by a combination of Kodaira vanishing theorem, Grauert theorem and Riemann-Roch-Hirzebruch theorem, for large $k \in \nat$, the cohomology of the fibers 
	\begin{equation}
		E_k := H^0(X, L^{\otimes k} \otimes G),
	\end{equation}
	form vector bundles over the base of the family, $B$.
	\par 
	We fix a relative volume form $\nu_{TY/B}$ of $\pi$ and endow $E_k$ with the $L^2$-Hermitian metric $h^{E_k}$, defined using the pointwise scalar product $\langle \cdot, \cdot \rangle_{h}$, induced by $h^L$ and $h^G$, as follows
	\begin{equation}\label{eq_l2_prod}
		\scal{\alpha}{\alpha'}_{L^2} := \int_X \scal{\alpha(x)}{\alpha'(x)}_{h} d \nu_{TY/B}(x), \qquad \text{for } \, \, \alpha, \alpha' \in E_k.
	\end{equation}
	\par 
	Berndtsson in \cite{BerndAnnMath}, \cite{BernJDG} (or  Ma-Zhang in \cite{MaZhangSuperconn}, cf. Theorem \ref{thm_ma_zhang}), proved that the Hermitian vector bundles $(E_k, h^{E_k})$ are positive in the sense of Nakano (or dual Nakano) for $k$ big enough. In particular, the Monge-Ampère volumes, ${\rm{MAVol}}(E_k, h^{E_k})$, are well-defined for $k$ big enough.
	\begin{sloppypar}
		By the positivity of $\Theta^L$, the form $\omega :=  c_1(L, h^L) := \frac{\Theta^L}{2 \pi}$ defines the orthogonal splitting 
		\begin{equation}\label{eq_splitting}
			TY = (TY/\pi^*TB) \oplus T^H Y,
		\end{equation}
		with a smooth isomorphism $T^H Y \cong \pi^*TB$.
		For $U \in TB$, we denote by $U^H \in T^H Y$ the horizontal lift of $U$ with respect to (\ref{eq_splitting}). 
		We denote by $\omega_H \in \ccal^{\infty}(Y, \pi^*\Lambda^2(TB))$ the section
		\begin{equation}\label{eq_omega_h}
			\omega_H(U,V) :=  \omega(U^H,V^H), \qquad \text{for } \, \, U, V \in TB.
		\end{equation}
	\end{sloppypar}
	\begin{thm}\label{thm_main}
		As $k \to \infty$, the Monge-Ampère volumes satisfy the following asymptotics 
		\begin{equation}\label{eq_main_form}
			{\rm{MAVol}}(E_k, h^{E_k}) 
			\sim
			k^{\dim B}
			\cdot
			\int_B 
			\exp 
			\Big(
				\frac{\int_X \log \big( \omega_H^{\wedge \dim B} / \pi^* \nu_B \big)
				\omega^{\wedge \dim X}}{\int_X c_1(L)^{\dim X}}
			\Big)
			d \nu_B,
		\end{equation}
		where $\nu_B$ is an arbitrary volume form on $B$ (the right-hand side of (\ref{eq_main_form}) doesn't depend on the choice of $\nu_B$). In particular, the asymptotics depends solely on the form $\omega$ and not on $h^G$ or $\nu_{TY/B}$.
	\end{thm}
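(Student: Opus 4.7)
The plan is to convert $\mathrm{MAVol}(E_k, h^{E_k})$ into a fiberwise spectral problem via the Ma-Zhang-Berndtsson curvature asymptotic, and then to evaluate the normalized log-determinant through a matrix Szeg\H{o}-type theorem for Toeplitz operators. Fix a smooth volume form $\nu_B^0$ on $B$, set $V := \int_{X_b} c_1(L)^{\dim X}$ (constant in $b$), and recall $\rk{E_k} \sim V k^{\dim X}/(\dim X)!$ by Riemann-Roch-Hirzebruch. Writing locally $\det{}_{TB \otimes E_k^*}((\Theta^{E_k})^T/2\pi)^{1/\rk{E_k}} = g_k(b)\,\nu_B^0(b)$, Theorem \ref{thm_main} reduces by compactness of $B$ to the locally uniform pointwise convergence
\begin{equation*}
g_k(b)/k^{\dim B} \longrightarrow \exp\bigl(V^{-1}\tinyint_{X_b}\log(\omega_H^{\wedge \dim B}/\pi^* \nu_B^0)\,\omega^{\wedge \dim X}\bigr);
\end{equation*}
independence of $\nu_B^0$ on the right is immediate, since rescaling $\nu_B^0$ by $c(b) > 0$ shifts the exponent by $-\log c(b)$ (as $\omega^{\wedge \dim X}/V$ is a probability measure on $X_b$) and exponentiating cancels the corresponding change in $d\nu_B^0$.

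The first step is Ma-Zhang's curvature formula (Theorem \ref{thm_ma_zhang}) together with Berndtsson's Toeplitz interpretation, yielding the expansion
\begin{equation*}
\Theta^{E_k}(U, \overline V) = k\,T_k\bigl(\omega_H(U, \overline V)\bigr) + R_k(U, \overline V), \qquad U, V \in T_b B,
\end{equation*}
where $T_k(f) := \Pi_k M_f \Pi_k$ is the Bergman-Toeplitz operator on $E_{k,b} = H^0(X_b, L^k \otimes G|_{X_b})$ with symbol $f$, and $\|R_k\|_{op} = O(1)$ locally uniformly in $b$. Assembling the curvature into a positive block operator $k M_k + N_k$ on $(E_{k,b}^*)^{\oplus \dim B}$ with $(i,j)$-block $T_k(\omega_H(\partial z_i, \overline{\partial z}_j))$, and exploiting uniform positivity of $\omega_H$ on the isomorphism $T^H Y \cong \pi^* TB$ to bound the spectrum of $M_k$ away from zero, I split
\begin{equation*}
\tfrac{1}{\rk{E_k}} \log\det(kM_k + N_k) = \dim B \cdot \log k + \tfrac{1}{\rk{E_k}} \log\det M_k + O(1/k),
\end{equation*}
the error being $\tfrac{1}{\rk{E_k}}\mathrm{Tr}\log(I + M_k^{-1}N_k/k)$ on a space of dimension $\dim B \cdot \rk{E_k}$.

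The key remaining step is a matrix-valued Szeg\H{o} asymptotic: for any smooth positive-definite Hermitian matrix symbol $F$ on $X_b$,
\begin{equation*}
\tfrac{1}{\rk{E_k}}\,\mathrm{Tr}\log T_k(F) \longrightarrow V^{-1}\int_{X_b}\log\det{}_{TB}F\,\cdot\,\omega^{\wedge \dim X}.
\end{equation*}
I would deduce this via Berezin-Toeplitz calculus: the product rule $T_k(F)T_k(F') = T_k(FF') + O(k^{-1})$ and the leading trace asymptotic $\rk{E_k}^{-1}\mathrm{Tr}\,T_k(F) \to V^{-1}\int \mathrm{Tr}_{TB}F\,\omega^{\wedge \dim X}$ together yield the analogous convergence for $\phi(T_k(F))$ for any polynomial $\phi$, and the uniform spectral lower bound on $T_k(F)$ coming from positivity of $F$ then permits Weierstrass approximation of $\log$ on the relevant compact interval. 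Applying this with $F_{ij} = \omega_H(\partial z_i, \overline{\partial z}_j)/2\pi$ and identifying $\log\det_{TB}F$ with $\log(\omega_H^{\wedge \dim B}/\pi^* \nu_B^0)$ plus a constant absorbed into the $(\dim B)!\prod \imun dz_j \wedge d\bar z_j$ normalization in \eqref{eq_defn_det} produces the desired pointwise limit.

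The main obstacle is upgrading both the matrix Szeg\H{o} limit and the bound on $R_k$ to be uniform in $b \in B$; this requires a parametric version of the Berezin-Toeplitz expansion over the family $\pi$, which follows from the smoothness and properness of $\pi$ but must be tracked carefully. Once uniform convergence is in hand, integration against $\nu_B^0$ recovers \eqref{eq_main_form}.
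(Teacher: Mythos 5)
Your proposal is correct and shares the paper's skeleton --- Ma--Zhang's curvature asymptotic (Theorem \ref{thm_ma_zhang}) reduces everything to the normalized log-determinant of a block Toeplitz operator with symbol $(\omega_H(\partial z_i, \overline{\partial z}_j))$, which is then evaluated by a Szeg\H{o}-type limit --- but you treat the matrix-valued determinant by a genuinely different lemma. The paper's Lemma \ref{lem_det} performs a pointwise Cholesky factorization $(f_{ij}) = L L^T$ and uses the Berezin--Toeplitz product rule (\ref{eq_commut_tfg}) to replace the block determinant by the scalar Toeplitz determinant of $\det(f_{ij})$, so that only the scalar spectral-measure theorem of Boutet de Monvel--Guillemin (Theorem \ref{thm_bout_guil}, applied with $g = \log$) is needed. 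You instead prove a matrix-valued Szeg\H{o} limit directly: the product rule gives $T_k(F)^l = T_k(F^l) + O(k^{-1})$, the first-order trace asymptotic handles monomials, and the inclusion $\spec T_k(F) \subset [\min_x \lambda_{\min}(F), \max_x \lambda_{\max}(F)]$ (valid because $T_k(F)$ is a compression of multiplication by $F$) permits Weierstrass approximation of $\log$ on a compact interval bounded away from $0$; since $\tr{\log F} = \log \det F$ pointwise, this recovers the same limit. The two routes rest on identical ingredients, and your argument is essentially the matrix-valued analogue of the paper's own appendix proof of Theorem \ref{thm_bout_guil}; what it buys is that you never need to extract (and verify the smoothness of) Cholesky factors, at the cost of re-deriving the spectral-measure statement for matrix symbols rather than quoting the scalar one. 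Your remaining steps --- the $O(1/k)$ control of $\frac{1}{\rk{E_k}} \log \det (I + M_k^{-1} N_k / k)$ and the uniformity in $b \in B$ --- are handled at the same level of detail as in the paper (Ma--Zhang's estimate is already an operator-norm bound uniform over $B$, and the Bergman kernel expansions underlying the trace asymptotics hold uniformly in smooth families), so I see no gap.
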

	\begin{rem}
		One can also define the Monge-Ampère volumes for Nakano positive vector bundles by the same formula as in (\ref{eq_defn_mavol}) but without the transposition on $\Theta^F$. Theorem \ref{thm_main}, as well as other results of this paper, continue to hold for this definition as well.
	\end{rem}
	\begin{comment}
	\begin{rem} Our methods give a full asymptotic expansion of ${\rm{MAVol}}(E_k, h^{E_k})$ in  powers of $k$. 
	\end{rem}
	The main technical tools in establishing Theorem \ref{thm_main} are the spectral theory of Toeplitz operators of Boutet de Monvel-Guillemin \cite{BoutGuillSpecToepl} and the asymptotics  for the curvature of the direct image sheaves associated to high tensor powers of a relatively ample line bundle due to Ma-Zhang \cite{MaZhangSuperconn}.
	\end{comment}
	\par Now, for a given holomorphic vector bundle $F$, Demailly in \cite[Remark 3.3b)]{DemailluHYMGriff} proposed to study metrics $h^F$ achieving the maximum of the Monge-Ampère volumes ${\rm{MAVol}}(F, h^F)$. In the second part of this paper, we study a related problem in the asymptotic sense for $E_k$.
	\par 
	More precisely, let's define for a dual Nakano positive Hermitian vector bundle $(F, h^F)$ on a complex manifold $Z$ of dimension $n$ the rescaled Monge-Ampère volume $\overline{\rm{MAVol}}(F, h^F)$ by
	\begin{equation}\label{eq_defn_ma_renorm}
		\overline{\rm{MAVol}}(F, h^F) := \frac{{\rm{MAVol}}(F, h^F)}{\rk{F}^{-n} \int_Z c_1(F)^{n}}.
	\end{equation}
	This definition is motivated by the result of Demailly \cite[Proposition 3.2]{DemailluHYMGriff}, stating that 
	\begin{equation}\label{eq_resc_ma_bound}
		\overline{\rm{MAVol}}(F, h^F) \leq 1.
	\end{equation}
	\par Now, instead of considering a maximization of ${\rm{MAVol}}(F, h^F)$ as a functional on $h^F$ -- a problem which according to Demailly \cite[Remark 3.3b)]{DemailluHYMGriff} involves solving a fourth order non linear differential system -- we consider an easier problem of classification of $(L, h^L)$, for which the associated $L^2$-metrics $h^{E_k}$ asymptotically saturate the upper bound $1$ on $\overline{\rm{MAVol}}(E_k, h^{E_k})$ from (\ref{eq_resc_ma_bound}).
	\begin{thm}\label{thm_charact_max}
		The following assertions are equivalent:
		 a) As $k \to \infty$, $\overline{\rm{MAVol}}(E_k, h^{E_k}) \to 1$.
		\par b)
		For some $p \in \nat^*$, the Hermitian line bundle $(L^{\otimes p}, (h^{L})^{\otimes p})$ decomposes as 
		\begin{equation}\label{eq_dec_max_char}
			(L^{\otimes p}, (h^{L})^{\otimes p}) = (L_V, h^{L_V}) \otimes  (\pi^* L_H, \pi^* h^{L_H}),
		\end{equation}		 
		where  $L_H$ is an  ample line bundle on $B$, endowed with a Hermitian metric $h^{L_H}$ with positive curvature and a Hermitian line bundle $(L_V, h^{L_V})$ has semi-positive curvature $\Theta^{L_V}$, for which the kernel $\ker(\Theta^{L_V})$ defines a foliation of dimension $\dim B$.
	\end{thm}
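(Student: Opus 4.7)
The plan is to combine Theorem~\ref{thm_main} with Riemann--Roch asymptotics, analyze the resulting ratio via concavity of $\log \det$ on positive Hermitian forms, and then promote the equality case to the algebraic decomposition in~(b). Set $n := \dim X$, $m := \dim B$, $r := \int_X c_1(L)^n$. Riemann--Roch gives $\rk{E_k} \sim k^n r/n!$ and $c_1(E_k) \sim k^{n+1} \pi_*(c_1(L)^{n+1})/(n+1)!$. Using the orthogonal decomposition $\omega = \omega_V + \omega_H$ induced by $\omega$ (with no mixed components), and the vanishing $\omega_V^{n+1} = 0$, only one term survives in the expansion of $\pi_* \omega^{n+1}$, yielding $\pi_* \omega^{n+1} = (n+1) r \overline{\omega}_H$, where $\overline{\omega}_H := r^{-1} \int_X \omega_H \wedge \omega_V^n$ is the fiberwise average of $\omega_H$, a positive $(1,1)$-form on $B$. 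It follows that $\rk{E_k}^{-m} \int_B c_1(E_k)^m \sim k^m \int_B \overline{\omega}_H^m$, and combining with Theorem~\ref{thm_main} and choosing $\nu_B = \overline{\omega}_H^m$, we get
\begin{equation*}
\lim_{k \to \infty} \overline{\rm{MAVol}}(E_k, h^{E_k}) = \frac{\int_B \exp(g)\, \overline{\omega}_H^m}{\int_B \overline{\omega}_H^m}, \qquad g(b) := \frac{1}{r} \int_{X_b} \log\!\big(\omega_H^m / \pi^* \overline{\omega}_H^m\big)\, \omega^n.
\end{equation*}

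Reading $\omega_H^m(y)/\overline{\omega}_H^m(\pi(y))$ as $\det \omega_H(y)/\det \overline{\omega}_H(\pi(y))$ and applying Jensen's inequality for the concave function $\log \det$ on the cone of positive Hermitian forms on $T_b B$, one finds $g(b) \leq 0$ with equality iff $\omega_H(y)$ is independent of $y \in X_b$. Since $\overline{\omega}_H^m$ has full support, the limit equals $1$ precisely when $\omega_H$ is constant along every fiber, i.e.\ $\omega_H = \pi^* \eta$ for a smooth $(1,1)$-form $\eta$ on $B$. This reduces~(a) to a purely analytic condition on $\omega$.

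The main obstacle is to upgrade this analytic splitting to the algebraic decomposition of~(b). Define $\omega_V := \omega - \pi^* \eta \in \Omega^{1,1}(Y)$. One checks that $\omega_V$ is semi-positive with $\ker \omega_V = T^H Y$ of constant rank $m$; that $d\omega = 0$ evaluated on triples of horizontal vectors forces $\pi^* d\eta = 0$, hence both $\eta$ and $\omega_V$ are closed; and that $T^H Y$ is involutive by the standard Cartan-formula argument for kernels of closed $2$-forms of constant rank, so it defines a foliation of dimension $m$. The crucial integrality step is the identity $\pi_* \omega^{n+1} = (n+1) r \eta$ (a consequence of $\omega_V^{n+1} = 0$): since the left-hand side represents $\pi_*(c_1(L)^{n+1}) \in H^2(B, \integ)$, we conclude $[\eta] \in H^{1,1}(B, \rat)$. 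Choose $p \in \nat$ with $p[\eta]$ integral; by the Lefschetz $(1,1)$-theorem $p \eta = c_1(L_H)$ for a holomorphic line bundle $L_H$ on $B$, equipped via the $\partial \bar{\partial}$-lemma with a metric $h^{L_H}$ satisfying $c_1(L_H, h^{L_H}) = p \eta$ (ample, since $\eta > 0$). Setting $L_V := L^{\otimes p} \otimes \pi^* L_H^{-1}$ and choosing $h^{L_V}$ with $c_1(L_V, h^{L_V}) = p \omega_V$, the identification $(L^{\otimes p}, (h^L)^{\otimes p}) = (L_V, h^{L_V}) \otimes \pi^*(L_H, h^{L_H})$ holds up to an absorbable positive constant, because the logarithm of the ratio of these metrics has vanishing $i \partial \bar{\partial}$ on the compact manifold $Y$.

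For the converse (b)$\Rightarrow$(a), one first shows that $T^H Y = \ker \Theta^{L_V}$: the $\omega$-orthogonality of the splitting and the vanishing of $\pi^* c_1(L_H, h^{L_H})$ on vertical vectors together force these two rank-$m$ distributions to coincide. From this, $p \omega_H = \pi^* c_1(L_H, h^{L_H})$ is pulled back from $B$, so $\omega_H$ is fiberwise constant, Jensen becomes an equality at every $b \in B$, and (a) follows. The hardest single step in the whole argument is the rationality of $[\eta]$, which is the essential algebro-geometric input making the passage from the analytic to the algebraic splitting possible.
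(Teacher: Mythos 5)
Your argument is correct, and its skeleton matches the paper's: combine Theorem \ref{thm_main} with the Riemann--Roch asymptotics of $\rk{E_k}$ and $c_1(E_k)$, reduce \textit{a)} to the equality case of an integral inequality, characterize equality as fiberwise constancy of $\omega_H$, and then manufacture $L_H$. Two steps differ genuinely. For the equality analysis (the paper's Lemma \ref{lem_eq_case}) you apply Jensen's inequality directly to the strictly concave function $\log \det$ on positive Hermitian forms, normalized by the fiberwise average $\overline{\omega}_H$, whereas the paper splits the same estimate into the arithmetic--geometric mean inequality on the eigenvalues of $\omega_H$ relative to $\pi_*(\omega^{\wedge(\dim X+1)})$ followed by the scalar Jensen inequality; the two are equivalent, and your normalization makes it immediate that the limit equals $1$ exactly when $\omega_H = \pi^*\eta$. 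The more substantial divergence is the passage from this analytic splitting to the line bundle $L_H$: the paper takes $L_H := \langle L, \ldots, L\rangle$, the Deligne pairing with its Deligne metric, so that integrality and the curvature identity $c_1(L_H, h^{L_H}) = \pi_*\big(c_1(L,h^L)^{\wedge(\dim X+1)}\big)$ of (\ref{eq_curv_del}) come from one canonical construction with the explicit value $p = (\dim X + 1)\int_X c_1(L)^{\dim X}$; you instead note that $[\eta]$ is a rational multiple of the integral class $\pi_*(c_1(L)^{\dim X + 1})$, clear denominators, and invoke the Lefschetz $(1,1)$-theorem plus the $\partial\bar{\partial}$-lemma (legitimate here, since $\eta$ is a closed positive form with rational class, so $B$ is projective). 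Your route trades the Deligne-pairing machinery for a non-canonical $L_H$ defined only up to a numerically trivial twist; both are valid. The remaining steps --- closedness of $\eta$, integrability of the constant-rank kernel of the closed form $\Theta^{L_V}$, and the identification $\ker \Theta^{L_V} = T^H Y$ giving $p\,\omega_H = \pi^* c_1(L_H, h^{L_H})$ in the direction \textit{b)} $\Rightarrow$ \textit{a)} --- coincide with the paper's.
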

	\begin{rem}
		a)
		As $\Theta^{L_V}$ is a closed differential form, by Frobenius theorem and the formula for the exterior derivative, the last condition is equivalent to the constancy of the rank of $\Theta^{L_V}$.
		By the semi-positivity of $\Theta^{L_V}$ and (\ref{eq_dec_max_char}), this is also equivalent to the requirement that the numerical dimension of $L_V$ (see Demailly \cite[Definition (18.13)]{DemaillyAnalMeth}) is equal to $\dim X$.
		\par 
		b) The foliation condition leads to some restrictions on the geometry of $X$, see Beauville \cite{BeauvilleSplit}, and on the growth of certain cohomology groups, see Bouche \cite{Bouche}.
	\end{rem}
	\begin{comment}
	\par The main idea of the proof of Theorem \ref{thm_charact_max} is to study the case of saturation in Theorem \ref{thm_main} and to see that the Hermitian line bundle $(L_H, h^{L_H})$ can be constructed through Deligne-Weil product.
	\end{comment}
	\par 
	Let's now specify previous theorem and consider vector bundles $E_k$, obtained as symmetric powers of a given ample (in the sense of Hartshorne) vector bundle $F$ on a complex manifold $Z$. 
	\par 
	We say that a vector bundle $F$ admits a \textit{projectively flat Hermitian structure} if the associated $PGL(\rk{F}, \comp)$-principal bundle is defined by a representation $\rho : \pi(Z) \to PU(\rk{F})$.
	This is equivalent to the existence of a Hermitian metric $h^F$ on $F$ such that the associated curvature satisfies $R^F = \omega \otimes {\rm{Id}}_F$ for a $(1, 1)$-form $\omega$, cf. Kobayashi \cite[Proposition 1.4.22]{KobaVB}.
	\par 
	We fix a Hermitian metric $h^{\mathcal{O}}$ on the hyperplane line bundle $\mathcal{O}_{\mathbb{P}(F^*)}(1)$ over $\mathbb{P}(F^*)$ with positive curvature (such a metric exists by the ampleness of $F$, see Section \ref{sect_pos_conc}). 
	Consider the natural projection $\pi : \mathbb{P}(F^*) \to Z$ and the $L^2$-metric $h^{S^k F}$ on the symmetric products $S^k F$, $k \in \nat^*$, associated to the volume form induced by the curvature of $h^{\mathcal{O}}$ and the isomorphism 
	\begin{equation}\label{eq_sk_iso}
		S^k F \cong R^0 \pi_* (\mathcal{O}_{\mathbb{P}(F^*)}(k)).
	\end{equation}
	\begin{thm}\label{thm_sk}
	The following assertions are equivalent:
		 \textit{a)} There exists a Hermitian metric $h^{\mathcal{O}}$ on $\mathcal{O}_{\mathbb{P}(F^*)}(1)$  as above
		 such that, as $k \to \infty$, $\overline{\rm{MAVol}}(S^k F, h^{S^k F}) \to 1$.
		\begin{sloppypar}
			 \textit{b)} The vector bundle $F$ admits a projectively flat Hermitian structure. Moreover, if $h^{\mathcal{O}}$ from \textit{a)}  is induced by a Hermtian metric $h^F$ on $F$, then $R^F$ satisfies an identity $R^F = \omega \otimes Id_E$ for some $(1, 1)$-form $\omega$, and in this case we have $\overline{\rm{MAVol}}(S^k F, h^{S^k F}) = 1$ for any $k \in \nat^*$.
		 \end{sloppypar}
	\end{thm}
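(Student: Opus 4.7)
The plan is to apply Theorem \ref{thm_charact_max} in the specific setting $Y = \mathbb{P}(F^*)$, $B = Z$, $L = \mathcal{O}_{\mathbb{P}(F^*)}(1)$, with $G$ trivial and the relative volume form on the fibers induced by $c_1(\mathcal{O}(1), h^{\mathcal{O}})$. That theorem reduces assertion \textit{a)} to the existence of $p \in \nat^*$ and a decomposition $(\mathcal{O}(p), (h^{\mathcal{O}})^{\otimes p}) = (L_V, h^{L_V}) \otimes \pi^*(L_H, h^{L_H})$, where $L_H$ is ample with positive curvature and $\Theta^{L_V}$ is semi-positive with $\ker\Theta^{L_V}$ defining a foliation of dimension $\dim Z$.

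The first observation is that this foliation is automatically transverse to the fibers of $\pi$: for any fiber $X \cong \mathbb{P}^{r-1}$ with $r = \rk{F}$, the bundle $\pi^* L_H|_X$ is trivial, so $\Theta^{L_V}|_X = p \Theta^{\mathcal{O}(1)}|_X$ is positive-definite; hence $\ker\Theta^{L_V}$ meets the vertical tangent bundle only at $0$ and by dimension count provides a horizontal lift. Integrability of $\ker\Theta^{L_V}$ follows from the remark after Theorem \ref{thm_charact_max}, via closedness and constant rank of $\Theta^{L_V}$. Thus $\ker\Theta^{L_V}$ defines a holomorphic flat Ehresmann connection on the holomorphic fibration $\pi: \mathbb{P}(F^*) \to Z$.

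I would then translate this flat connection into a projectively flat Hermitian structure on $F$. Flatness produces a monodromy representation $\rho: \pi_1(Z) \to \aut(\mathbb{P}^{r-1}) = PGL(r, \comp)$ presenting $\mathbb{P}(F^*)$ as the associated bundle. Since $h^{L_V}$ is parallel along $\ker\Theta^{L_V}$ and restricts fiberwise to a positive multiple of the Fubini-Study metric on $\mathcal{O}(p)|_X$, the horizontal parallel transport identifies fiberwise Fubini-Study Kähler forms, forcing $\rho$ to take values in the isometry group $PU(r)$. This is the desired projective flatness of $F$. For the \emph{moreover} part, when $h^{\mathcal{O}}$ is induced from a Hermitian metric $h^F$ on $F$, I would use the classical expression of $\Theta^{\mathcal{O}(1)}$ in terms of $R^F$ (a horizontal trace-like part pulled back from $Z$ plus a vertical Fubini-Study term) together with the decomposition to conclude that $R^F = \omega \otimes \text{Id}_F$ for some $(1,1)$-form $\omega$.

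For the converse \textit{b)} $\Rightarrow$ \textit{a)}, if $R^F = \omega \otimes \text{Id}_F$, then the $L^2$-metric on $S^k F \cong H^0(\mathbb{P}(F^*), \mathcal{O}(k))$ agrees, up to a constant depending only on $r$ and $k$, with the natural fiberwise symmetric power metric on $S^k F$ induced by $h^F$; this latter has curvature $k\omega \otimes \text{Id}_{S^k F}$, and a direct evaluation of (\ref{eq_defn_mavol})--(\ref{eq_defn_det}) yields $\overline{\rm{MAVol}}(S^k F, h^{S^k F}) = 1$ identically for every $k \in \nat^*$. The main technical hurdle will be in the third paragraph: promoting the analytic decomposition of $\mathcal{O}(p)$ into a genuine $PU(r)$-structure on $F$, keeping track of the passage through the $p$-th power, and in the ``moreover'' part, inverting the relationship between $h^F$ and $h^{\mathcal{O}(1)}$ to recover the matrix identity $R^F = \omega \otimes \text{Id}_F$ purely from the Hermitian line-bundle decomposition.
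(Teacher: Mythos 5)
Your direction \textit{b)} $\Rightarrow$ \textit{a)} and the ``moreover'' clause are essentially the paper's argument (Griffiths' formula plus the observation that the $L^2$-metric coincides with the metric induced by $h^F$), and your reduction of \textit{a)} via Theorem \ref{thm_charact_max} to the decomposition $(\mathcal{O}_{\mathbb{P}(F^*)}(p), (h^{\mathcal{O}})^{\otimes p}) = (L_V, h^{L_V}) \otimes \pi^*(L_H, h^{L_H})$ is also the paper's starting point. But from there the paper does something quite different, and your route has a genuine gap precisely at the step you yourself flag as the ``main technical hurdle.'' The holonomy of the distribution $\ker \Theta^{L_V}$ along a loop in $Z$ is, a priori, only a \emph{diffeomorphism} of the fiber $\mathbb{P}^{r-1}$ preserving the transverse data, i.e.\ a symplectomorphism of $(\mathbb{P}^{r-1}, \Theta^{L_V}|_X)$. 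To land in $\aut(\mathbb{P}^{r-1}) = PGL(r, \comp)$ you need the foliation to be \emph{holomorphic} (that $\ker \Theta^{L_V}$ is a holomorphic subbundle of $T^{1,0}\mathbb{P}(F^*)$, so that the local trivializations it induces are biholomorphic). This does not follow from closedness and constant rank of $\Theta^{L_V}$ alone: the Frobenius argument in the Remark after Theorem \ref{thm_charact_max} only yields a smooth foliation with $J$-invariant (hence complex) leaves, which is weaker than holomorphic dependence of the leaves on the fiber point. The group of symplectomorphisms of $\mathbb{P}^{r-1}$ is vastly larger than $PU(r)$, so without this step the monodromy argument does not produce a projectively flat structure. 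A secondary inaccuracy: $\Theta^{L_V}|_X = p\,\Theta^{\mathcal{O}(1)}|_X$ is a positive form in the hyperplane class but is \emph{not} a multiple of the Fubini--Study form for a general $h^{\mathcal{O}}$, so the holonomy does not ``identify Fubini--Study forms''; one would instead have to argue that a subgroup of $PGL(r,\comp)$ preserving some fixed Kähler form is compact, hence conjugate into $PU(r)$ --- but this only becomes available after the holomorphicity issue is resolved.

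The paper avoids all of this. From the decomposition it extracts only the cohomological statement that $\Lambda_F = c_1(\mathcal{O}_{\mathbb{P}(F^*)}(1)) - \frac{1}{\rk{F}}\pi^* c_1(F)$ is semi-positive (using $c_1(L_H) = \pi_*(c_1(\mathcal{O}_{\mathbb{P}(F^*)}(1))^{\rk{F}}) = c_1(F)$ up to the normalization of $p$), and then proves a separate statement (Theorem \ref{thm_charac_prfl}): semi-positivity of $\Lambda_F$ already implies projective flatness. The mechanism there is analytic, not monodromy-theoretic: one takes a semi-positive representative $\alpha \in \Lambda_F$, builds the $L^2$-metric on $F \cong R^0\pi_*(\mathcal{O}_{\mathbb{P}(F^*)}(1))$ with fiberwise volume $\alpha^{\rk{F}-1}$, applies Berndtsson's curvature inequality for direct images, and observes that the error term is controlled by $\pi_*(g \cdot \alpha^{\wedge \rk{F}})$, which vanishes because $\pi_*(\alpha^{\wedge\rk{F}})$ is cohomologically trivial and pointwise semi-positive; a trace argument then forces equality and $\Theta^F = \omega \otimes {\rm{Id}}_F$. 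None of this appears in your sketch, and it is the actual content of the implication \textit{a)} $\Rightarrow$ \textit{b)}. If you want to salvage your more geometric route, you would need to prove that the kernel foliation of a closed semi-positive $(1,1)$-form of constant rank on $\mathbb{P}(F^*)$ is a holomorphic foliation and that the resulting flat $PGL(r,\comp)$-structure is the one underlying $\mathbb{P}(F^*)$ --- a nontrivial statement that should be either proved or referenced.
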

	\begin{rem}
		A classical calculation shows that if $h^{\mathcal{O}}$ is induced by a Hermitian metric $h^F$  on $F$, then $h^{S^k F}$ corresponds precisely to the Hermitian metric on $S^k F$ induced by $h^F$.
	\end{rem}
	\begin{sloppypar}
		Let's now say few words about the tools used in our proofs.
	In the proof of Theorem \ref{thm_main}, we used some results on Toeplitz operators due to Boutet de Monvel-Guillemin \cite{BoutGuillSpecToepl}, Bordemann–Meinrenken–Schlichenmaier \cite{BordMeinSchli}, \cite{SchliBer}, and the asymptotic result of Ma-Zhang \cite{MaZhangSuperconn}. 
	In the proof of Theorem \ref{thm_charact_max}, we used Deligne pairing due to Deligne \cite{DeligDet} and Zhang \cite{ZhangDelProd}.
	Finally, in the proof of Theorem \ref{thm_sk}, we used the result of Berndtsson, \cite{BernJDG}, on the negativity of direct images.
	\end{sloppypar}
	\par 
	This article is organized as follows. In Section \ref{sect_pos_conc}, we recall notions of ampleness and positivity for vector bundles.
	Sections \ref{sect_main_pf}, \ref{sect_satur}, \ref{sect_symm_pow} are devoted to the proofs of Theorems \ref{thm_main}, \ref{thm_charact_max} and \ref{thm_sk} respectively. Finally, in Appendix \ref{app_toepl_sp}, we give an independent proof of a result about the spectrum of Toeplitz operators, which plays a crucial role in our proof of Theorems \ref{thm_main}.
	
	\par {\bf{Acknowledgements.}} Author would like to express his deepest gratitude for Jean-Pierre Demailly for numerous discussions on the subject of this article.
	He also thanks the colleagues and staff from Institute Fourier, University Grenoble Alps for their hospitality.
	This work is supported by the European Research Council grant ALKAGE number 670846 managed by J.-P. Demailly.
	
		\par {\bf{Notation.}} For two sequences $a_k$, $b_k$, $k \in \nat$, we write $a_k \sim b_k$ if $\lim_{k \to \infty} a_k / b_k = 1$. Denote
		\begin{equation}\label{nk_defn}
			N_k := \dim H^0(X, L^{\otimes k} \otimes G).
		\end{equation}
		For a Hermitian vector bundle $(F, h^F)$, by $R^F := (\nabla^F)^2$ we denote the curvature of the associated Chern connection  $\nabla^F$, and by $\Theta^F := \imun R^F$. By $c_1(F)$ we denote the first Chern class and by $c_1(F, h^F) := {\rm{Tr}} [\Theta^F] / 2 \pi$ the first Chern form.
		Finally, all manifolds in this article are compact.
	\par 
	\section{Ampleness and positivity of vector bundles}\label{sect_pos_conc}
	
	The main goal of this section is to review the notions of ampleness and positivity on vector bundles.
	\par 
	Following Hartshorne, \cite[\S 2]{HartsAmple}, we say that a vector bundle $E$ over a complex manifold $Z$ is \textit{ample} if for every coherent sheaf $\mathscr{F}$, there is an integer $n_0 > 0$, such that for every $n > n_0$, the sheaf $\mathscr{F} \otimes S^n E$ is generated as an $\mathscr{O}_Z$-module by its global sections.
	According to \cite[Proposition 3.2]{HartsAmple}, ampleness of $E$ is equivalent to the ampleness of the line bundle $\mathcal{O}_{\mathbb{P}(E^*)}(1)$ over $\mathbb{P}(E^*)$.
	\par 
	We fix a Hermitian vector bundle $(E, h^E)$, and denote $r := \rk{E}$, $n := \dim Z$. 
	Fix some holomorphic coordinates $(z_1, \ldots, z_n)$ on $X$, an orthonormal frame $e_1, \ldots, e_r$ of $E$, and decompose the curvature as follows
	\begin{equation}\label{eq_curv_dec}
		\imun R^E = \sum_{1 \leq j, k \leq n} \sum_{1 \leq \lambda, \mu \leq r} c_{jk \lambda \mu}  \imun dz_j \wedge d \overline{z}_k \otimes e_{\lambda}^* \otimes e_{\mu}.
	\end{equation}
	We say that $(E, h^E)$ is \textit{Griffiths positive} if the associated quadratic form
	\begin{equation}
		 \widetilde{\Theta}^E(\xi \otimes v) := \langle \imun R^E(\xi, \overline{\xi}) v, v \rangle_{h^E} =  \sum_{1 \leq j, k \leq n} \sum_{1 \leq \lambda, \mu \leq r} c_{jk \lambda \mu}  \xi_j  \overline{\xi}_k v_{\lambda} \overline{v}_{\mu}
	\end{equation}
	takes positive values on non zero tensors $\xi \otimes v \in T^{1, 0}X \otimes E$.
	\textit{Nakano positivity}, see \cite{NakanoPos}, asks that for non zero tensors $\tau = \sum \tau_{j \lambda} \frac{\partial}{\partial z_j} \otimes e_{\lambda}$ in $T^{1, 0}X \otimes E$, the following quadratic form
	\begin{equation}\label{eq_nak_pos}
		\widetilde{\Theta}^E(\tau) := \sum_{1 \leq j, k \leq n} \sum_{1 \leq \lambda, \mu \leq r} c_{jk \lambda \mu} \tau_{j \lambda} \overline{\tau}_{k \mu}  
	\end{equation}
	should take positive values.
	\textit{Dual Nakano positivity stipulates} that the following form
	\begin{equation}\label{eq_dualnak_pos}
		-\widetilde{\Theta}^{F*}(\tau) := \sum_{1 \leq j, k \leq n} \sum_{1 \leq \lambda, \mu \leq r} c_{jk \mu \lambda} \tau_{j \lambda} \overline{\tau}_{k \mu}  
	\end{equation}
	should take positive values on all non zero tensors $\tau = \sum \tau_{j \lambda} \frac{\partial}{\partial z_j} \otimes e_{\lambda}^*$ in $T^{1, 0}X \otimes E^*$.
	\par Griffiths positivity is weaker than Nakano positivity and dual Nakano positivity, cf. Demailly \cite[Proposition 6.6]{DemCompl}. One advantage of the notion of dual Nakano positivity over the notion of Nakano positivity is that it behaves better with respect to taking quotients: a quotient of a dual Nakano positive vector bundle is dual Nakano positive, cf. \cite[Propositon 6.10]{DemCompl}, whereas the analogous statement for Nakano positive vector bundles is false, cf. \cite[Example 6.8, end of \S VII.6]{DemCompl}. 
	Also, in \cite[Basic question 1.7]{DemailluHYMGriff}, Demailly formulated a conjecture stating that the ampleness for a vector bundle is equivalent to the existence of a dual Nakano positive metric. For Nakano positive metrics, the analogical question is known to be false, cf. \cite[(1.8)]{DemailluHYMGriff}.
	\par 
	As we need it later, let's recall an argument of Griffiths showing that the existence of a Griffiths-positive metric $h^F$ on a vector bundle $F$ implies  ampleness.
	We endow the line bundle $\mathcal{O}_{\mathbb{P}(F^*)}(1)$ with the metric $h^{\mathcal{O}}$ induced by $h^F$. 
	We fix a unitary vector $u \in F$ and denote by $u^* \in F^*$ the dual of it.
	We denote by $\omega_{FS, [u^*]}$ the Fubiny-Study form on $[u^*] \in \mathbb{P}(F_{\pi(u)}^*)$ induced by $h^F_{\pi(u)}$.
	Griffiths in \cite{GriffPosVect}, cf. \cite[(15.15)]{DemCompl}, proved that at the point $[u^*] \in \mathbb{P}(F^*)$, the following identity holds
	\begin{equation}\label{eq_griff_formula}
		c_1(\mathcal{O}_{\mathbb{P}(F^*)}(1), h^{\mathcal{O}})_{[u^*]}
		=
		\scal{\frac{\Theta^F}{2 \pi} u}{u}_{h^F} + \omega_{FS, [u^*]}.
	\end{equation}
	\par By the Griffiths-positivity of $(F, h^F)$ and (\ref{eq_griff_formula}), we see  that the metric $h^{\mathcal{O}}$ has positive curvature and thus $\mathcal{O}_{\mathbb{P}(F^*)}(1)$ is ample. By the criteria of Griffiths, it means that $F$ is ample.

	\section{Growth of Monge-Ampère volumes, a proof of Theorem \ref{thm_main}}\label{sect_main_pf}
	The main goal of this section is to give a proof of Theorem \ref{thm_main}. 
	We use notations from Section \ref{sect_intro}. 
	\par 
	Understanding the asymptotics of ${\rm{MAVol}}(E_k, h^{E_k})$ relies heavily on the asymptotics of 
	\begin{equation}\label{eq_rek}
		\Theta^{E_k} \in \ccal^{\infty} \Big( B, \wedge^2 TB \otimes {\rm{End}}\big( H^0(X, L^{\otimes k} \otimes G) \big) \Big).
	\end{equation}
	Toeplitz operators (in the sense of the book of Ma-Marinescu \cite[\S 7]{MaHol}) provide an appropriate framework for studying (\ref{eq_rek}).
	To introduce Toeplitz operators, we denote by 
	\begin{equation}\label{eq_br_proj}
		P_k : L^2(X, L^{\otimes k} \otimes G) \to H^0(X, L^{\otimes k} \otimes G),
	\end{equation}
	the othogonal projection (with respect to (\ref{eq_l2_prod})) from the space of $L^2$-sections to holomorphic sections.
	For a given smooth function $f: X \to \real$, we define 
	\begin{equation}\label{eq_tfk_defn}
		T_{f, k} : H^0(X, L^{\otimes k} \otimes G) \to H^0(X, L^{\otimes k} \otimes G), \qquad T_{f, k}(h) := P_k \circ M_f,
	\end{equation}
	where by $M_f :  H^0(X, L^{\otimes k} \otimes G) \to L^2(X, L^{\otimes k} \otimes G)$, we mean a multiplication by $f$. 
	\begin{sloppypar}
	Following \cite[Definition 7.2.1]{MaHol}, a family of operators $T_k \in \enmr{H^0(X, L^{\otimes k} \otimes G)}$, $k \in \nat^*$, is called a \textit{Toeplitz operator} if for some smooth functions $g_l : Y \to \real$, $l \in \nat$, $T_k$ coincides up to an error of order $O(k^{-\infty})$, as $k \to \infty$, (in the operator norm) with Laurent series $\sum_{l \leq 0} k^l T_{g_l, k}$.
	\end{sloppypar}
	\par 
	A central result in our proof is the theorem of Ma-Zhang \cite{MaZhangSuperconn}, which establishes that $k^{-1} \Theta^{E_k}$, $k \in \nat^*$, is a Toeplitz operator. We only need the following weak version of this theorem, which uses the first term of the Laurent expansion of $k^{-1} \Theta^{E_k}$.
	\begin{thm}[{Ma-Zhang \cite{MaZhangSuperconn} }]\label{thm_ma_zhang}
	There is a constant $C > 0$, such that for any $k \in \nat^*$, we have 
		\begin{equation}
			\Big\| \frac{ \Theta^{E_k}}{2 \pi} - k \cdot T_{\omega_H, k} \Big\| \leq C,
		\end{equation}
		where $\| \cdot \|$ is the operator norm and $\omega_H$ is as in (\ref{eq_omega_h}).
	\end{thm}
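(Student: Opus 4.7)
The plan is to establish Theorem \ref{thm_ma_zhang} by viewing $E_k$ as a finite-rank holomorphic subbundle of the infinite-rank Hilbert bundle $\mathcal{H}_k \to B$ whose fiber at $b \in B$ is $L^2(X_b, L^{\otimes k} \otimes G|_{X_b})$, and applying the standard curvature identity for holomorphic subbundles (up to sign in the second-fundamental-form piece),
\begin{equation*}
R^{E_k} \;=\; P_k\, R^{\mathcal{H}_k}\, P_k \;\pm\; \bigl(P_k \nabla^{\mathcal{H}_k} P_k^{\perp}\bigr) \wedge \bigl(P_k^{\perp} \nabla^{\mathcal{H}_k} P_k\bigr),
\end{equation*}
together with the horizontal transport on $\mathcal{H}_k$ furnished by the $\omega$-orthogonal decomposition (\ref{eq_splitting}).

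First I would compute the ambient curvature $R^{\mathcal{H}_k}(U,\overline{V})$. Differentiating the $L^2$ pairing twice along the horizontal lifts $U^H,\overline{V}^H$ and collecting Lie-derivative terms shows that $R^{\mathcal{H}_k}(U,\overline{V})$ acts on a smooth fiberwise section as multiplication by $k\,\omega(U^H,\overline{V}^H)$ plus a bounded zero-order symbol $g_0 \in \ccal^{\infty}(Y,\real)$ gathering the contributions of $R^G$, the geometry of $\nu_{TY/B}$, and the variation of the horizontal splitting. Since by (\ref{eq_omega_h}) one has $\omega(U^H,\overline{V}^H) = \omega_H(U,\overline{V})$, composing with $P_k$ on both sides yields the Toeplitz identity
\begin{equation*}
P_k\, R^{\mathcal{H}_k}\, P_k \;=\; 2\pi\imun\bigl(k\,T_{\omega_H,k} + T_{g_0,k}\bigr),
\end{equation*}
which already supplies the stated leading term of the theorem.

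Next I would bound the second-fundamental-form contribution. A commutator computation identifies $P_k^{\perp}\nabla^{\mathcal{H}_k}_{U^H} u$, for holomorphic $u \in E_k$, with $\Box_k^{-1}$ applied to a zero-order operator $A_U\, u$ that measures the failure of horizontal transport to preserve fiberwise holomorphicity. By Kodaira-Nakano positivity of $L$, the fiberwise Kodaira Laplacian $\Box_k$ on $(0,1)$-forms enjoys the spectral gap $\Box_k \geq c\, k$ on $(\ker \Box_k)^{\perp}$ for $k$ large, yielding $\bigl\| P_k^{\perp} \nabla^{\mathcal{H}_k}_{U^H}|_{E_k} \bigr\| = O(k^{-1/2})$. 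The wedge of two such factors therefore has operator norm $O(1)$, so $\bigl\| \Theta^{E_k}/(2\pi) - k\, T_{\omega_H,k} \bigr\| \leq C$ as claimed.

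The main obstacle is precisely this second step: one must pin down the exact form of $A_U$ arising from the commutator of $\overline{\partial}^X$ with horizontal lifts, and then apply a uniform resolvent estimate for $\Box_k$ on $(\ker \Box_k)^{\perp}$. This is where the semiclassical/Bergman-kernel machinery of Ma-Zhang, Dai-Liu-Ma or Ma-Marinescu is invoked; the first step, by contrast, is essentially a direct differentiation once the horizontal structure is set up.
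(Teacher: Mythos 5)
The paper offers no proof of Theorem \ref{thm_ma_zhang} to compare against: the statement is imported verbatim from Ma--Zhang \cite{MaZhangSuperconn}, and the text explicitly says it only uses this weak form of their finer result (which gives the full Laurent expansion of $k^{-1}\Theta^{E_k}$ under mere relative ampleness). So your proposal has to be judged against the literature. Its skeleton --- realize $E_k$ inside the Hilbert bundle $\mathcal{H}_k$, compress the ambient curvature to get the Toeplitz leading term, and control the second fundamental form via the spectral gap of the fiberwise Kodaira Laplacian --- is indeed the standard route (Berndtsson's subbundle computation, refined by Ma--Zhang), so the approach is the right one.

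As a proof, however, it has two concrete gaps beyond the one you flag. First, $R^{\mathcal{H}_k}(U,\overline{V})$ does \emph{not} act on fiberwise sections as multiplication by $k\,\omega_H(U,\overline{V})$ plus a bounded zero-order symbol: since the $\omega$-orthogonal horizontal distribution $T^HY$ is in general non-integrable, the curvature contains the term $-\nabla_{[U^H,\overline{V}^H]^{\mathrm{vert}}}$, a \emph{first-order} vertical differential operator. Its compression $P_k\nabla^{1,0}_{w}P_k$ is in fact an $O(1)$ Toeplitz operator (integration by parts against holomorphic sections turns it into $-T_{\mathrm{div}_\nu(w),k}$), but this must be proved rather than absorbed into a ``symbol $g_0$''. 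Second, and this is the heart of the theorem, the boundedness of your $A_U$ is exactly the assertion that the $O(k)$ contribution $k\,(\iota_{U^H}R^L)$ restricted to $T^{0,1}(Y/B)$ drops out of the commutator $[\overline{\partial}^{X},\nabla_{U^H}]$; this holds precisely because $T^HY$ is the $\omega$-orthogonal complement of the vertical tangent, so $\omega(U^H,\overline{v})=0$ for $v$ vertical. For a generic horizontal lift $A_U=O(k)$, the second-fundamental-form term is then of the same order $k$ as the leading term, and the estimate fails --- so this cancellation is not a technicality to be delegated to ``semiclassical machinery'' (the spectral gap itself is just Bochner--Kodaira--Nakano) but the main algebraic point. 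One must also verify that the horizontal-transport connection is the Chern connection of a holomorphic structure on $\mathcal{H}_k$ before the subbundle curvature identity applies. These steps are the actual content of \cite{MaZhangSuperconn}; your sketch presupposes them.
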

	\begin{rem}
		Ma-Zhang in \cite{MaZhangSuperconn} also computed explicitly the next term of the Laurent expansion of $k^{-1} \Theta^{E_k}$.
		Their result even holds under weaker assumption of \textit{relative} ampleness of $L$.
	\end{rem}
	\par 
	The next ingredient in our proof is the spectral theory of Toeplitz operators of Boutet de Monvel-Guillemin \cite[Theorem 13.13]{BoutGuillSpecToepl} (see also Appendix \ref{app_toepl_sp} for an alternative proof of this result), which describes the weak limit, as $k \to \infty$, of the spectral measure of a Toeplitz operator. Recall that $N_k$ was defined in (\ref{nk_defn}). 
	\begin{thm}[{Boutet de Monvel-Guillemin \cite[Theorem 13.13]{BoutGuillSpecToepl} }]\label{thm_bout_guil}
		Let $f : X \to \real$ be  a smooth function and $U \subset \real$ be an open subset satisfying ${\rm{Im}}(f) \subset U$. Fix an arbitrary continuous function $g : U \to \real$.
		Then the following asymptotic holds
		\begin{equation}\label{eq_bout_guil}
			\lim_{k \to \infty} \frac{1}{N_k} \sum_{\lambda \in \spec(T_{f, k})} g(\lambda) 
			=
			 \frac{\int_{X} g(f(x)) \cdot \omega^{\wedge \dim X}(x)}{\int_{X} c_1(L)^{ \dim X}}. 
		\end{equation}
	\end{thm}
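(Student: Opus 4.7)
The plan is a classical moment method. Since $f$ is real-valued, the Toeplitz operator $T_{f, k} = P_k M_f P_k$ is self-adjoint with $\|T_{f, k}\| \leq \|f\|_{L^\infty}$. Consequently, $\spec(T_{f, k})$ lies in the fixed compact interval $I := [-\|f\|_{L^\infty}, \|f\|_{L^\infty}]$, independently of $k$, and by Stone-Weierstrass it is enough to establish (\ref{eq_bout_guil}) in the case $g(\lambda) = \lambda^m$, $m \in \nat$. Equivalently, I must show
\begin{equation*}
	\frac{1}{N_k} \tr{T_{f, k}^m} \longrightarrow \frac{\int_X f^m \omega^{\wedge \dim X}}{\int_X c_1(L)^{\dim X}}, \qquad k \to \infty.
\end{equation*}

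To reduce the $m$-th power to a single Toeplitz operator I would invoke the composition formula of Bordemann-Meinrenken-Schlichenmaier \cite{BordMeinSchli} (cf.~\cite{MaHol}), namely $T_{f_1, k} \circ T_{f_2, k} = T_{f_1 f_2, k} + O(k^{-1})$ in operator norm. Iterating yields $T_{f, k}^m = T_{f^m, k} + O(k^{-1})$, and since both operators act on a space of dimension $N_k$, dividing the trace by $N_k$ gives $\frac{1}{N_k} \tr{T_{f, k}^m} = \frac{1}{N_k} \tr{T_{f^m, k}} + O(k^{-1})$. Next, the pointwise trace identity reads $\tr{T_{h, k}} = \int_X h(x) B_k(x, x) d \nu_{TY/B}(x)$, where $B_k(x, x)$ denotes the diagonal Bergman kernel of $P_k$. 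The Tian-Catlin-Zelditch asymptotic expansion (cf.~\cite{MaHol}) provides $B_k(x, x) d\nu_{TY/B}(x) = k^{\dim X} \omega^{\wedge \dim X}(x)/(\dim X)! + O(k^{\dim X - 1})$ uniformly on $X$. Applied with $h \equiv 1$ this recovers Riemann-Roch, $N_k = k^{\dim X} \int_X c_1(L)^{\dim X}/(\dim X)! + O(k^{\dim X - 1})$, and applied with $h = f^m$, together with the composition estimate, delivers the desired moment limit.

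The main technical obstacle is the uniform operator-norm control in the composition formula, which rests on sharp off-diagonal Bergman kernel estimates (Dai-Liu-Ma, or the microlocal calculus of \cite{BoutGuillSpecToepl}); once this control is granted, the assembly above is formal. A minor point concerns the domain of $g$: writing $\langle T_{f, k} v, v \rangle_{L^2} = \int_X f |v|^2 d\nu_{TY/B}$, one actually has $\spec(T_{f, k}) \subset [\inf f, \sup f]$, which lies in $U$ whenever $X$ is connected (the general case reducing componentwise to connected components of $X$), so $g$ is defined on the compact set on which the polynomial approximation of Stone-Weierstrass is performed.
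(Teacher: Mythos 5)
Your argument is correct, but it takes a genuinely different route at the key step. Both proofs begin the same way: reduce by Stone--Weierstrass to the moments $g(\lambda)=\lambda^m$, using that $T_{f,k}$ is self-adjoint with $\spec(T_{f,k})\subset[\inf f,\sup f]$ so that the polynomial approximation happens on a fixed compact subset of $U$ (a point the paper leaves implicit and you rightly make explicit, including the disconnected case). The divergence is in how the higher moments are computed. You collapse $T_{f,k}^m$ to the single operator $T_{f^m,k}$ up to $O(k^{-1})$ in operator norm by iterating the Bordemann--Meinrenken--Schlichenmaier composition estimate --- which the paper records as (\ref{eq_commut_tfg}) and uses in the proof of Lemma \ref{lem_det}, so it is an available tool --- and then you only need the diagonal Bergman kernel expansion plus the elementary bound $|\tr{A}-\tr{B}|\leq N_k\|A-B\|$. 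The paper instead writes $\tr{T_{f,k}^m}$ as an $m$-fold integral of products of Bergman kernels, localizes near the diagonal via Proposition \ref{prop_off_diag_rough}, and evaluates the resulting Gaussian integral using the full near-diagonal expansion of Theorem \ref{thm_dai_liu_ma} (carried out explicitly for $m=2$ in (\ref{eq_spec_2})--(\ref{eq_spec_3})). Your route is shorter and essentially formal once the composition estimate is granted; the paper's route is self-contained relative to its stated aim of relying only on the Bergman kernel expansion, the composition formula being itself a consequence of the same off-diagonal estimates. A further minor difference: your statement of the diagonal expansion correctly accounts for an arbitrary relative volume form $\nu_{TY/B}$, whereas the appendix assumes for simplicity that the $L^2$-product is taken with respect to $\omega^{\wedge m}/m!$; and one should read the diagonal kernel $B_k(x,x)$ as carrying a fiberwise trace over the auxiliary bundle $G$, which cancels against the corresponding factor in $N_k$ and does not affect the limit.
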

	The final ingredient in our proof is the following technical lemma.
	\begin{sloppypar}
	\begin{lem}\label{lem_det}
		Let $(f_{ij}):  X \to \real$, $i, j = 1, \ldots, l$, be a matrix of smooth functions, which is strictly positive definite pointwise. Then we have
		\begin{equation}
			\big| \det \big( (T_{f_{ij}, k}) \big) \big|^{1 / N_k} \sim \big| \det \big( T_{\det(f_{ij}), k} \big) \big|^{1 / N_k},
		\end{equation}		 
		where the first determinant is viewed as a determinant of an operator from $\enmr{H^0(X, L^{\otimes k} \otimes G)^{\oplus l}}$, the second one is a determinant of an operator from $\enmr{H^0(X, L^{\otimes k} \otimes G)}$, and the last one is the pointwise determinant of the matrix $(f_{ij})$.
	\end{lem}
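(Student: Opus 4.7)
The plan is to take logarithms and reduce the asymptotic equivalence to showing $\frac{1}{N_k}\log \det A_k - \frac{1}{N_k}\log \det B_k \to 0$, where $A_k := (T_{f_{ij},k}) \in \mathrm{End}(V_k^{\oplus l})$ and $B_k := T_{\det(f_{ij}),k} \in \mathrm{End}(V_k)$ with $V_k := H^0(X, L^{\otimes k} \otimes G)$. First I would establish that both operators are self-adjoint and uniformly bounded below by a positive constant. Self-adjointness follows from the Hermitian symmetry of $(f_{ij}(x))$. The bound for $A_k$ comes from the identity $\langle T_{f_{ij},k} v_j, v_i\rangle_{L^2} = \int_X f_{ij}(x) \langle v_j(x), v_i(x)\rangle_h \, d\nu_{TY/B}$ combined with the pointwise estimate $(f_{ij}(x)) \geq c_0 I_l$ from strict positive definiteness, yielding $A_k \geq c_0 \cdot I$; the bound for $B_k$ follows from $\det(f_{ij}) \geq c_0^l > 0$ via the standard lower bound for scalar Toeplitz operators with strictly positive symbol (itself a consequence of Theorem \ref{thm_bout_guil}). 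Hence $|\det A_k|^{1/N_k}$ and $|\det B_k|^{1/N_k}$ are positive and bounded uniformly above and below, so the claim reduces to the log-difference tending to $0$.

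For the $B_k$-side, Theorem \ref{thm_bout_guil} applied with $f = \det(f_{ij})$ and $g = \log$ on a neighbourhood of the compact positive range of $f$ gives
\begin{equation*}
\frac{1}{N_k}\log \det B_k = \frac{1}{N_k}\sum_{\lambda \in \spec(B_k)} \log \lambda \longrightarrow \frac{\int_X \log \det(f_{ij})(x) \cdot \omega^{\wedge \dim X}}{\int_X c_1(L)^{\dim X}}.
\end{equation*}
For $A_k$, the corresponding statement is the matrix-valued analogue of Theorem \ref{thm_bout_guil}: for a smooth pointwise-Hermitian $F \in \ccal^\infty(X, \mathrm{End}(\comp^l))$ whose pointwise spectrum lies in a fixed compact $K \subset \real$, and any continuous $g$ on a neighbourhood of $K$,
\begin{equation*}
\frac{1}{N_k}\mathrm{tr}\, g\big((T_{F_{ij},k})\big) \longrightarrow \frac{\int_X \mathrm{tr}\, g(F(x)) \cdot \omega^{\wedge \dim X}}{\int_X c_1(L)^{\dim X}}.
\end{equation*}
Applied with $F = (f_{ij})$ and $g = \log$, and using $\mathrm{tr}\log = \log \det$ on positive operators together with $\mathrm{tr}\log F(x) = \log \det(f_{ij})(x)$, one obtains exactly the same limit as for $B_k$. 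Subtracting the two limits gives the required convergence to zero.

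The main obstacle is justifying the matrix-valued extension of Theorem \ref{thm_bout_guil} invoked above. A natural route combines the standard Toeplitz multiplicative calculus $T_{f,k} T_{g,k} = T_{fg,k} + O(k^{-1})$ with a partition-of-unity argument: on each open set of a suitable cover, $(f_{ij})$ admits a smooth unitary diagonalization $U^*\,\mathrm{diag}(\mu_1,\ldots,\mu_l)\,U$, and up to errors of order $O(k^{-1})$ in operator norm one can replace $A_k$ by a block-diagonal matrix whose scalar Toeplitz blocks have symbols $\mu_\alpha(x)$; these errors are absorbed by the factor $1/N_k$ so that, after Stone-Weierstrass approximation of $\log$ by polynomials, the scalar Theorem \ref{thm_bout_guil} can be applied block-by-block. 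Alternatively, the argument sketched in Appendix \ref{app_toepl_sp} generalizes essentially verbatim to matrix-valued symbols, providing the extension directly.
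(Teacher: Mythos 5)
Your argument is correct in substance but follows a genuinely different route from the paper. The paper's proof stays entirely within the multiplicative Toeplitz calculus: it applies a pointwise Cholesky factorization $(f_{ij}) = L\,L^{T}$ with $L=(g_{ij})$ smooth and lower-triangular, uses the bound $\|T_{f,k}T_{g,k}-T_{fg,k}\|\le C/k$ to transfer this to an approximate factorization of the block operator matrix $(T_{f_{ij},k})$, exploits that the determinant of a block-lower-triangular operator matrix is the product of the determinants of its diagonal blocks, and recombines via $g_{11}^{2}\cdots g_{ll}^{2}=\det(f_{ij})$. This never requires any spectral asymptotics and reduces everything to scalar Toeplitz operators. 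You instead compute the limits of both normalized log-determinants separately and show they coincide, which requires a matrix-valued extension of Theorem \ref{thm_bout_guil}; this is more machinery, but it identifies the common limit $\big(\int_X \log\det(f_{ij})\,\omega^{\wedge\dim X}\big)/\int_X c_1(L)^{\dim X}$ explicitly, which is in any case what the proof of Theorem \ref{thm_main} extracts afterwards. A genuine merit of your write-up is that you make explicit the uniform lower bounds $A_k\ge c_0$ and $B_k\ge c_0^l$: some such invertibility is also silently needed in the paper's own passage from the operator-norm estimate (\ref{eq_appr_chol}) to the asymptotic equality (\ref{eq_detf_tog}) of normalized determinants. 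Two caveats. First, your primary route to the matrix-valued limit theorem via a \emph{smooth} local unitary diagonalization of $(f_{ij})$ is not available in general: eigenprojections of a smooth Hermitian family need not be smooth (or even continuous) at eigenvalue crossings, so a partition of unity does not rescue this. Your alternative route does work: for $g(x)=x^p$ one expands $\mathrm{Tr}\big[(T_{f_{ij},k})^p\big]$ into cyclic products of scalar Toeplitz operators, applies the calculus (the trace of an $O(1/k)$ operator-norm error is $O(N_k/k)$, hence negligible after dividing by $N_k$), and concludes by Stone--Weierstrass using the uniform spectral localization of $A_k$ in a compact subset of $(0,\infty)$; this is the version you should retain. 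Second, the uniform lower bound for $T_{\det(f_{ij}),k}$ does not follow from Theorem \ref{thm_bout_guil} (weak convergence of spectral measures does not control the bottom of the spectrum); it follows from the elementary identity $\langle T_{f,k}u,u\rangle=\int_X f\,|u|^2\,d\nu\ge (\inf f)\,\|u\|^2$ for holomorphic $u$, just as for $A_k$.
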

	\end{sloppypar}
	Let's first see how this all is useful in proving Theorem \ref{thm_main}. Then we establish Lemma \ref{lem_det}.
	\begin{proof}[Proof of Theorem \ref{thm_main}.]
		\begin{sloppypar}				
		 We fix a volume form $\nu_B$ over $B$. It defines at every point $x \in B$ a unimodular basis $\partial z_1, \ldots, \partial z_m \in T^{1, 0}B$, $m = \dim B$. By (\ref{eq_defn_det}), we clearly have
		\begin{equation}
			\det{}_{TB \otimes E_k^*}\big(  (\Theta^{E_k})^T / 2 \pi \big)^{1/N_k}
			=
			\det{} \big(  \Theta^{E_k}(\partial z_i, \overline{\partial z}_j)^T / 2 \pi \big)^{1/N_k} 
			\cdot
			m! \cdot
			\nu_B,
		\end{equation}
		where the last determinant is viewed as a determinant of an operator from $\enmr{(H^0(X, L^{\otimes k} \otimes G)^*)^{\oplus m}}$. 
		Clearly, Theorem \ref{thm_ma_zhang} implies that we have
		\begin{equation}
			\det{} \big(  \Theta^{E_k}(\partial z_i, \overline{\partial z}_j)^T / 2 \pi \big)^{1/N_k} 
			\sim
			k^{m}
			\big|
			\det{} \big(  T_{\omega_H(\partial z_i, \overline{\partial z}_j), k} \big) \big|^{1/N_k},
		\end{equation}
		where the second determinant is viewed as a determinant of an operator from $\enmr{H^0(X, L^{\otimes k} \otimes G)^{\oplus m}}$.
		Note that the matrix  $(\omega_H(\partial z_i, \overline{\partial z}_j))$ is strictly positive definite pointwise by the positivity of $(L, h^L)$, so we can apply  Lemma \ref{lem_det}, which gives us
		\begin{equation}
			\big| \det{} \big(  T_{\omega_H(\partial z_i, \overline{\partial z}_j), k} \big) \big|^{1/N_k}
			\sim
			\frac{1}{m!} \cdot
			\big| \det{} \big(  T_{\omega_H^{\wedge m}/\pi^* \nu_B, k} \big) \big|^{1/N_k}.
		\end{equation}
		\end{sloppypar}
		\par  
		By the positivity of $(L, h^L)$, the function $\omega_H^{\wedge m}/\pi^* \nu_B$ is positive.
		Clearly, we have
		\begin{equation}
			\log \big| \det{} \big(  T_{\omega_H(\partial z_i, \overline{\partial z}_j), k} \big) \big|^{1/N_k}
			=
			\frac{1}{N_k} \sum \log(\lambda_{i, k}),
		\end{equation}
		where $\lambda_{i, k}$ enumerates the spectrum of $T_{\omega_H(\partial z_i, \overline{\partial z}_j), k}$ for $k \in \nat^*$.
		So we can apply Theorem \ref{thm_bout_guil} for $g := \log$ to see that, as $k \to \infty$, we have
		\begin{equation}
			\det{} \big(  T_{\omega_H^{\wedge m}/\pi^* \nu_B, k} \big)^{1/N_k}
			\to 
			\exp 
			\Big(
				\frac{\int_X \log \big( \omega_H^{\wedge \dim B} / \pi^* \nu_B \big)
				\omega^{\wedge \dim X}}{\int_X c_1(L)^{\dim X}}
			\Big).
		\end{equation}
		Theorem \ref{thm_main} now follows directly from the above.
	\end{proof}
	\begin{proof}[Proof of Lemma \ref{lem_det}]
	\begin{sloppypar}
		The central idea of the proof is to use a result of Bordemann–Meinrenken–Schlichenmaier \cite{BordMeinSchli}, cf. \cite[Theorem 3.1]{SchliBer}, stating that the algebra, induced by the operators $T_{f, k}$, $f : X \to \real$, coincides asymptotically, as $k \to \infty$, with an algebra of functions on $X$ with standard multiplication. More precisely, in \cite{BordMeinSchli}, \cite{SchliBer}, authors proved that for any smooth functions $f, g : X \to \real$, the product $T_{f, k} \circ T_{g, k}$ is a Toeplitz operator.
		As a consequence of this and explicit evaluation of the first term of Laurent expansion of $T_{f, k} \circ T_{g, k}$, they obtained that there is a constant $C > 0$, such that for any $k \in \nat^*$, the following bound holds
		\begin{equation}\label{eq_commut_tfg}
			\Big\| T_{f, k} \circ T_{g, k} - T_{fg, k} \Big\| \leq \frac{C}{k}.
		\end{equation}
		\par As a sidenote, remark that Ma-Marinescu in \cite{MaMarToepl}, \cite[Theorem 7.4.1]{MaHol} gave an alternative proof (based on the asymptotic expansion of the Bergman kernel) of the above results in a more general setting of symplectic manifolds.
		Then, for Kähler manifolds, Ma-Marinescu in \cite[Theorem 0.3]{MaMarBTKah} calculated explicitly two next terms of the mentionned Laurent expansion, and Ioos in \cite{IoosBTSympl} extended the calculation of the  second term to symplectic manifolds.
		\end{sloppypar}
		\par Now, apply Cholesky algorithm pointwise to the matrix $(f_{ij})$ to represent it in the form $(f_{ij}) = L \cdot L^T$, where $L = (g_{i j})$ is a lower-triangular matrix, and $g_{i j}$ are smooth functions.
		Clearly,
		\begin{equation}\label{eq_def_chol}
			g_{1 1}^{2} \cdot \ldots \cdot g_{l l}^{2} = \det (f_{i j}).
		\end{equation}
		From (\ref{eq_commut_tfg}), we see that for the matrices composed of Toeplitz operators, an asymptotic version of Cholesky decomposition holds, i.e. there is a constant $C > 0$ such that for any $k \in \nat^*$, we have
		\begin{equation}\label{eq_appr_chol}
			\Big\| (T_{f_{ij}, k}) - (T_{g_{i j}, k}) \cdot (T_{g_{j i}, k}) \Big\| \leq \frac{C}{k}.
		\end{equation}
		From (\ref{eq_appr_chol}), we deduce that
		\begin{equation}\label{eq_detf_tog}
			\big| \det (T_{f_{ij}, k}) \big| ^{1/N_k} \sim \big| \det (T_{g_{i j}, k}) \big|^{2/N_k}.
		\end{equation}
		But as the matrix $(g_{i j})$ is lower-triangular, we see that
		\begin{equation}\label{eq_t_eq_prod_l}
			\det (T_{g_{i j}, k}) = \det (T_{g_{1 1}, k}) \cdot \ldots \cdot \det (T_{g_{l l}, k}).
		\end{equation}
		But then again by (\ref{eq_commut_tfg}) and (\ref{eq_def_chol}), we get
		\begin{equation}\label{eq_detg_to_detf}
			\big| \det (T_{g_{1 1}, k}) \cdot \ldots \cdot \det (T_{g_{l l}, k}) \big|^{2/N_k} \sim  \big|\det \big( T_{\det(f_{ij}), k} \big)\big|^{1 / N_k}.
		\end{equation}
		From (\ref{eq_detf_tog}), (\ref{eq_t_eq_prod_l}) and (\ref{eq_detg_to_detf}), we conclude.
	\end{proof}

\section{Saturation in the bound of Demailly, a proof of Theorem \ref{thm_charact_max}}\label{sect_satur}
	The main goal of this section is to prove Theorem \ref{thm_charact_max}. To do so, let's first study the asymptotics of $\rk{E_k}$ and $c_1(E_k)$, as they appear in the denominator of $\overline{\rm{MAVol}}(E_k, h^{E_k})$, (\ref{eq_defn_ma_renorm}). 
	\par By Kodaira vanishing and Riemann-Roch-Grothendieck theorems, we have
	\begin{equation}\label{eq_ch_ek_form}
		\ch(E_k) = \int_{X} \td(TY / B) \ch(L^{\otimes k} \otimes G).
	\end{equation}
	By letting  $k \to \infty$ in (\ref{eq_ch_ek_form}), we see that
	\begin{align}\label{eq_nk_asymp}
		&
		\rk{E_k} \sim k^{\dim X} \frac{\int_X c_1(L)^{\dim X}}{(\dim X)!},
		\\
		&
		c_1(E_k) \sim k^{\dim X + 1} \frac{\pi_* (c_1(L)^{\dim X + 1})}{(\dim X + 1)!}.
	\end{align}
	From (\ref{eq_nk_asymp}), we see directly that
	\begin{equation}\label{eq_asymp_ch_01}
		 \frac{c_1(E_k)}{\rk{E_k}}  \sim k \cdot \frac{\pi_* (c_1(L)^{\dim X + 1})}{(\dim X + 1) \cdot \int_X c_1(L)^{\dim X}}.
	\end{equation}
	\par 
	From Theorem \ref{thm_main}, (\ref{eq_resc_ma_bound}) and (\ref{eq_asymp_ch_01}), we infer that in the notations of Theorem \ref{thm_main}
	\begin{equation}\label{eq_bund_thm_dem}
		\int_B
			\exp 
			\Big(
				\frac{\int_X \log \big( \omega_H^{\wedge \dim B} / \pi^* \nu_B \big)
				\omega^{\wedge \dim X}}{\int_X c_1(L)^{\dim X}}
			\Big)
			d \nu_B
			\leq
			\frac{\int_B (\pi_*  (c_1(L)^{\dim X + 1}))^{\dim B}}{(\dim X + 1)^{\dim B} \cdot (\int_X c_1(L)^{\dim X})^{\dim B}}.
	\end{equation}
	Moreover, $\overline{\rm{MAVol}}(E_k, h^{E_k}) \to 1$, as  $k \to \infty$, if and only if we have an equality in (\ref{eq_bund_thm_dem}). 
	The proof of the following lemma with an independent derivation of (\ref{eq_bund_thm_dem}) is given in the end of this section.
	\begin{lem}\label{lem_eq_case}
		We have an equality in (\ref{eq_bund_thm_dem}) if and only if there exists a smooth function $g : B \to \real$ such that we have
		\begin{equation}\label{eq_eq_case}
			\omega_H = \pi^* \big( g \cdot \pi_* (\omega^{\wedge(\dim X + 1)} ) \big).
		\end{equation}
	\end{lem}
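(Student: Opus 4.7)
The plan is to prove (\ref{eq_bund_thm_dem}) as a fibrewise chain of two Jensen inequalities and to read the rigidity of the lemma off each of them. Set $n:=\dim X$, $m:=\dim B$ and $V:=\int_X c_1(L)^{n}$. The orthogonal splitting (\ref{eq_splitting}) decomposes $\omega=\omega_V+\omega_H$, where $\omega_V$ is the restriction of $\omega$ to the vertical subbundle; since $T^HY$ is $\omega$-orthogonal to $TY/\pi^*TB$, the cross terms vanish and $\omega^{\wedge(n+1)}=(n+1)\,\omega_V^{\wedge n}\wedge\omega_H$. Integrating along the fiber and using that $\int_{X_b}\omega_V^{\wedge n}=V$ is independent of $b\in B$, we obtain the identity
\begin{equation}
\pi_*\bigl(\omega^{\wedge(n+1)}\bigr)=(n+1)V\,\bar\omega_H,\qquad \bar\omega_H(b):=\frac{1}{V}\int_{X_b}\omega_H\,\omega_V^{\wedge n}\in\Lambda^2T_b^*B,
\end{equation}
where $\bar\omega_H$ is a positive $(1,1)$-form on $B$ by the positivity of $(L,h^L)$.

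Fix $b\in B$ and let $d\mu_b:=\omega_V^{\wedge n}/V$ be the induced smooth probability measure on $X_b$. Minkowski's determinant inequality states that on the cone of positive Hermitian $(1,1)$-forms on $T_bB$ the functional $\alpha\mapsto(\alpha^{\wedge m}/\nu_B)^{1/m}$ is concave; combined with Jensen for $d\mu_b$ this yields
\begin{equation}
\int_{X_b}\bigl(\omega_H^{\wedge m}/\pi^*\nu_B\bigr)^{1/m}\,d\mu_b \ \le\ \bigl(\bar\omega_H^{\wedge m}/\nu_B\bigr)^{1/m}.
\end{equation}
Applying Jensen to the concave function $\log$ and then multiplying by $m$ gives
\begin{equation}
\int_{X_b}\log\bigl(\omega_H^{\wedge m}/\pi^*\nu_B\bigr)\,d\mu_b \ \le\ \log\bigl(\bar\omega_H^{\wedge m}/\nu_B\bigr).
\end{equation}
Exponentiating, integrating over $B$, and using $\bar\omega_H^{\wedge m}=\pi_*(\omega^{\wedge(n+1)})^{\wedge m}/((n+1)V)^m$ recovers (\ref{eq_bund_thm_dem}).

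For the rigidity, equality in (\ref{eq_bund_thm_dem}) forces pointwise equality in both Jensens above for every $b\in B$ by smoothness. The rigidity part of Minkowski's inequality forces the positive forms $\{\omega_H(y)\}_{y\in X_b}$ to be pairwise proportional on $T_bB$, and the rigidity of $\log$-Jensen then forces the proportionality factor to be constant on $X_b$; hence $\omega_H|_{X_b}$ is constant for every $b$, i.e.\ $\omega_H=\pi^*\tilde\omega_H$ for some $(1,1)$-form $\tilde\omega_H$ on $B$. The averaging identity above then identifies $\tilde\omega_H=\pi_*(\omega^{\wedge(n+1)})/((n+1)V)$, which is of the form stated in the lemma with the constant $g\equiv 1/((n+1)V)$. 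Conversely, if $\omega_H$ is pulled back from $B$, then $\omega_H|_{X_b}$ is constant for every $b$ and both Jensens trivially become equalities. The only nontrivial ingredient in this argument is the rigidity of Minkowski's determinant inequality in fiber-integrated form; this is the classical characterization (pairwise proportionality of the positive Hermitian forms involved), extended from finite convex combinations to arbitrary probability measures by an approximation argument, and it is the single place where the linear-algebraic structure of the Hermitian cone enters.
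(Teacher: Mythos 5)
Your argument is correct and rests on the same underlying mechanism as the paper's proof --- concavity of the determinant functional on the cone of positive $(1,1)$-forms combined with Jensen's inequality along the fibers, followed by a two-step rigidity analysis --- but it packages this differently. The paper diagonalizes $\omega_H$ pointwise against the fixed reference form $\pi^*(\pi_*(\omega^{\wedge(\dim X+1)}))$ and uses the scalar inequality $\lambda_1\cdots\lambda_{\dim B}\le\big((\lambda_1+\cdots+\lambda_{\dim B})/\dim B\big)^{\dim B}$ followed by $\log$-Jensen over the fiber, closing with the trace identity (\ref{eq_ineq_3}); you instead compare $\omega_H$ to its fiberwise average $\bar\omega_H$ via a fiber-integrated Minkowski inequality and then apply $\log$-Jensen. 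The two routes are essentially dual: the equality case of your integrated Minkowski step, proved via the supporting hyperplane of $\det^{1/m}$ at $\bar\omega_H$, reduces precisely to the equality case of $\det(B)^{1/m}\le\frac1m\operatorname{Tr}(A^{-1}B)\det(A)^{1/m}$, i.e.\ to the paper's arithmetic--geometric mean step. What your version buys is an explicit identification of the limiting form: you obtain $g\equiv 1/\big((\dim X+1)\int_X c_1(L)^{\dim X}\big)$, which is in fact forced by the statement of the lemma anyway, since fiber-integrating (\ref{eq_eq_case}) pins $g$ down to that constant. What it costs is that the rigidity now lives in a Jensen inequality for a matrix-concave functional against a continuous measure; your appeal to an ``approximation argument'' is the one soft spot, since approximation from finite convex combinations preserves the inequality but not its equality case. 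The correct justification is strict concavity of $\det^{1/m}$ transverse to rays (equivalently, the supporting-hyperplane argument above), which, given that $\omega^{\wedge\dim X}/\!\int_X c_1(L)^{\dim X}$ has full support on each fiber, forces all $\omega_H(y)$, $y\in X_b$, onto a single ray.

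Two small repairs. First, the displayed identity $\omega^{\wedge(n+1)}=(n+1)\,\omega_V^{\wedge n}\wedge\omega_H$ is false as an identity of forms on $Y$ once $\dim B\ge 2$, because the terms $\binom{n+1}{j}\omega_V^{\wedge j}\wedge\omega_H^{\wedge(n+1-j)}$ with $j<n$ do not vanish; only their fiber integrals do, and the fiber-integrated statement $\pi_*(\omega^{\wedge(n+1)})=(n+1)\,\pi_*(\omega_V^{\wedge n}\wedge\omega_H)$ is all you actually use. Second, ``pointwise equality in both Jensens for every $b$ by smoothness'' should be phrased as: equality of the integrals over $B$ forces equality of the continuous integrands almost everywhere, hence everywhere by continuity, and only then does one extract equality in each fiberwise inequality.
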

	\par 
	Now let's reformulate the condition (\ref{eq_eq_case}) in more geometrical terms. To do this, we need to review the definition of Deligne pairing due to Deligne \cite{DeligDet} (in relative dimension $1$) and Zhang \cite{ZhangDelProd} (in any relative dimension).
	\par 
	Let $\pi : Y \to B$ be as in Section \ref{sect_intro}, i.e. a proper surjection between two projective manifolds. Denote by $m$ the relative dimension of  $\pi$.
	If $L_0, \ldots , L_m$ are line bundles on $Y$, then the Deligne pairing $\langle L_0, \ldots , L_m \rangle$ is a line bundle on $B$. It is locally generated by symbols $\langle s_0, \ldots , s_m \rangle$, where the $s_j$ are rational sections of the $L_j$ whose divisors, $(s_j)$, have empty intersection. The transition functions are determined as follows. If for some $i$, the intersection $Y = \cap_{j \neq i} (s_j)$ is finite over $B$ and has empty intersection with ${\rm{div}}(f)$ for a given rational function $f$ on $Y$, then 
	\begin{equation}\label{eq_gluing_dw}
		\langle s_0, \ldots, f s_i, \ldots , s_m \rangle
		=
		{\rm{Norm}}_{Y/B}(f)
		\cdot
		\langle s_0, \ldots, s_m \rangle,
	\end{equation}
	where for $b \in B$, $({\rm{Norm}}_{Y/B}(f))(b) := \prod_{y \in \pi^{-1}(b)} f(y)$.
	The fact that (\ref{eq_gluing_dw}) determines a well-defined line bundle follows from Weil reciprocity formula, see Deligne \cite[\S 1.4]{DeligDet}.
	\par 
	It also follows easily from the definition that Deligne pairing is multilinear with respect to the tensor products of its components. 
	In other words, for line bundles $L_0, \ldots , L_m, L'$ over $Y$, for any $0 \leq i \leq m$, there is a canonical isomorphism 
	\begin{equation}\label{eq_multilinear}
		\langle L_0, \ldots,  L_i \otimes L', \ldots,  L_m \rangle \to  \langle L_0, \ldots , L_m  \rangle  \otimes \langle L_0, \ldots,  L_{i - 1},  L', L_{i + 1}, \ldots,  L_m \rangle.
	\end{equation}
	Finally, if $\pi : Y \to B$ is locally an isomorphism ($m = 0$), then  we have a canonical isomorphism
	\begin{equation}\label{eq_m0_del_prod}
		\langle L_0 \rangle \cong \det (R^0 \pi_* L_0).	
	\end{equation}
	\par 
	If $h^{L_j}$ are Hermitian metrics on $L_j$, then one can endow $\langle L_0, \ldots , L_m \rangle$ with the induced canonical Hermitian metric, called Deligne metric.
	It is constructed as follows.
	First, we require Deligne metric to be multilinear in $h^{L_j}$, i.e. compatible with (\ref{eq_multilinear}).
	From this and projectivity of $Y$, it is enough to define Deligne metric only for products of ample line bundles. 
	So we suppose $L_m$ is ample.
	Now, suppose that the divisor ${\rm{div}}(s_m)$ of $s_m$ is smooth (we can always choose $s_m$ in this way by Bertini's theorem).
	Define the Deligne norm $\| \cdot \|$ as follows, cf. \cite[\S 1.2]{ZhangDelProd}, 
	\begin{multline}\label{eq_del_metr_defn}
		\log \|  \langle s_0, \ldots, s_m  \rangle \| 
		=
		\log \|  \langle s_0, \ldots, s_{m-1}  \rangle({\rm{div}}(s_m)) \|
		\\
		+
		\int_X \log \| s_m \| c_1(L_0, h^{L_0}) \wedge \cdots \wedge c_1(L_{m-1}, h^{L_{m-1}}),
	\end{multline}
	where $\langle s_0, \ldots, s_{m-1}  \rangle({\rm{div}}(s_m))$ is an element of the Deligne product $\langle L_0, \ldots , L_{m-1} \rangle$ of the fibration $\pi' : {\rm{div}}(s_m) \to B$, and $\|  \langle s_0, \ldots, s_{m-1}  \rangle({\rm{div}}(s_m)) \|$ is the Deligne norm defined inductively. 
	To fix the basis of induction, we require (\ref{eq_m0_del_prod}) to become an isometry, when the right-hand side is endowed with the metric induced by the restriction of $h^{L_0}$.
	\par 
	Directly from (\ref{eq_del_metr_defn}) and Poincaré-Lelong formula, \cite[(6.6.1)]{DeligDet}, cf. \cite[\S 1.2]{ZhangDelProd}, we see that
	\begin{equation}\label{eq_curv_del}
		c_1 \Big( \langle L_0, \ldots , L_m \rangle, \| \cdot \|^2 \Big) 
		= 
		\pi_* \Big( 
		c_1(L_0, h^{L_0})
		\wedge
		\cdots
		\wedge
		c_1(L_m, h^{L_m})
		\Big).
	\end{equation}
	\begin{proof}[Proof of Theorem \ref{thm_charact_max}]
	\begin{sloppypar}
		Let's first verify that \textit{b)} implies \textit{a)}.
		Indeed (\ref{eq_dec_max_char}) along with the foliation condition imply that 
		\begin{equation}\label{eq_fol_cond_impl}
			p \cdot \omega_H = \pi^* c_1(L_H, h^{L_H}).
		\end{equation}
		By (\ref{eq_fol_cond_impl}), we see that the function under the logarithm in (\ref{eq_main_form}) is constant for $\nu_B := c_1(L_H, h^{L_H})^{\wedge \dim B}$. From this, we see that we get an equality in (\ref{eq_bund_thm_dem}) if 
		\begin{equation}
			p \cdot \pi_*(c_1(L)^{\dim X + 1})  =  (\dim X + 1) \int_X c_1(L)^{\dim X} \cdot c_1(L_H).
		\end{equation}
		But the last identity follows easily from (\ref{eq_fol_cond_impl}).
		\end{sloppypar}
		\par 
		Let's prove the opposite direction. 
		We fix a Hermitian line bundle $(L, h^L)$ satisfying \textit{a)} from Theorem \ref{thm_charact_max}. 
		Then by the discussion after (\ref{eq_bund_thm_dem}), (\ref{eq_bund_thm_dem}) becomes an identity.
		Let's consider the line bundle $L_H := \langle L, \ldots , L \rangle$ over $B$, where $L$ appears $\dim X + 1$ times, and denote by $h^{L_H}$ the Deligne metric on $L_H$ induced by $h^L$.
		By Lemma \ref{lem_eq_case} and (\ref{eq_curv_del}), in the notations of (\ref{eq_eq_case}), we have
		\begin{equation}\label{eq_lh_curv}
			c_1(L_H, h^{L_H}) = g \pi_* (\omega^{\wedge (\dim X + 1)}) \cdot (\dim X + 1) \int_X c_1(L)^{\dim X}.
		\end{equation}
		Clearly, (\ref{eq_eq_case}) and (\ref{eq_lh_curv}) imply (\ref{eq_dec_max_char}) for $p := (\dim X + 1)  \cdot \int_X c_1(L)^{\dim X}$, $L_V := L^{\otimes p} \otimes L_H^*$ and $h^{L_V}$ induced by $h^L$ and $h^{L_H}$.
		The foliation condition follows from (\ref{eq_omega_h}), (\ref{eq_eq_case}) and (\ref{eq_lh_curv}).
	\end{proof}
	
	\begin{proof}[Proof of Lemma \ref{lem_eq_case}]
		The proof of this lemma is inspired by \cite[Proposition 3.2]{DemailluHYMGriff}.
		We denote by $\lambda_1, \ldots, \lambda_{\dim B} : Y \to \real$ the eigenvalues of $\omega_H$ with respect to $\pi^*(\pi_* (\omega^{\wedge(\dim X + 1)}))$.
		Then 		
		\begin{equation}\label{eq_ineq_0}
			\frac{\omega_H^{\wedge \dim B}}{\pi^* (\pi_* (\omega^{\wedge(\dim X + 1)}))^{\wedge \dim B}}
			=
			 \lambda_1 \cdot \ldots \cdot \lambda_{\dim B}.
		\end{equation}
		By the inequality between the arithmetic and geometric means and (\ref{eq_ineq_0}), we have
		\begin{equation}\label{eq_ineq_1}
			\frac{\omega_H^{\wedge \dim B}}{\pi^* (\pi_* (\omega^{\wedge(\dim X + 1)}))^{\wedge \dim B}}
			\leq
			\Big(
				\frac{\lambda_1 + \cdots + \lambda_{\dim B} }{\dim B}
			\Big)^{\dim B}.
		\end{equation}
		By (\ref{eq_ineq_1}) and Jensen's inequality, we obtain
		\begin{multline}\label{eq_ineq_2}
			\exp 
			\Big(
				\int_X \log \big( \omega_H^{\wedge \dim B} / \pi^* (\pi_* (\omega^{\wedge(\dim X + 1)}))^{\wedge \dim B} \big)
				 \frac{\omega^{\wedge \dim X}}{\int_X c_1(L)^{\dim X}}
			\Big)
			\\
			\leq
			\Big[
			\int_X \Big(
				\frac{\lambda_1 + \cdots + \lambda_{\dim B} }{\dim B}
			\Big) \frac{\omega^{\wedge \dim X}}{\int_X c_1(L)^{\dim X}}		
			\Big]^{\dim B}.
		\end{multline}
		We denote by ${\rm{Tr}'}$ the trace of a $(1, 1)$-form on $B$ with respect to $\pi_* (\omega^{\wedge(\dim X + 1)})$. Then
		\begin{equation}\label{eq_ineq_3}
			\pi_* \Big(
				\big( \lambda_1 + \cdots + \lambda_{\dim B} \big)	
				\omega^{\wedge \dim X}
			\Big)
			=
			\frac{			{\rm{Tr}'}\,
			\big[
			\pi_* (\omega^{\wedge (\dim X + 1)})
			\big]}{\dim X + 1}
			=
			\frac{\dim B}{\dim X + 1}.
		\end{equation}
		\par 
		From (\ref{eq_ineq_2}) and (\ref{eq_ineq_3}), by putting $\nu_B := (\pi_* (\omega^{\wedge(\dim X + 1)}))^{\wedge \dim B}$, we get  an independent proof of (\ref{eq_bund_thm_dem}). 
		By investigating the equality case in (\ref{eq_ineq_1}), we see that in case of equality in (\ref{eq_bund_thm_dem}), at each point $x \in Y$, $\omega_H(x)$ has to be equal to $\pi^*(\pi_* (\omega^{\wedge(\dim X + 1)}))(x)$ up to a certain constant.
		By investigating the equality case in (\ref{eq_ineq_2}), we see that in case of equality in (\ref{eq_bund_thm_dem}),  the constant from the previous sentence has to be constant all the way through each fiber. 
	\end{proof}

\section{High symmetric powers of ample vector bundles, a proof of Theorem \ref{thm_sk}}\label{sect_symm_pow}
	The main goal of this section is to give a proof of Theorem \ref{thm_sk}. To do so we will establish a certain characterization of vector bundles admitting projectively flat Hermitian structures.
	\par 
	Recall that a class $\alpha \in H^{1,1}(Z, \comp) \cap H^2(Z, \real)$, in a Kähler manifold $(Z, \omega_0)$ is called \textit{semi-positive} if it contains a semi-positive $(1, 1)$-form.
	Following Demailly \cite[Proposition 4.2]{DemSingHermMetr}, $\alpha$ is called \textit{nef} if for any $\epsilon > 0$, there is a $(1, 1)$-form in $\alpha$, which is strictly bigger than $-\epsilon \omega_0$.
	\par  We conserve the notation for $F$, $\pi : \mathbb{P}(F^*) \to Z$ and $\mathcal{O}_{\mathbb{P}(F^*)}(1)$ from Section \ref{sect_intro}.
	\begin{thm}\label{thm_charac_prfl}
		The class $\Lambda_F := c_1(\mathcal{O}_{\mathbb{P}(F^*)}(1)) - \frac{1}{\rk{F}} \pi^* c_1(F)$ is semi-positive if and only if $F$  admits a projectively flat Hermitian structure.
	\end{thm}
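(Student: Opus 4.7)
\medskip
\noindent\textbf{Easy direction.} Suppose $F$ admits a projectively flat Hermitian structure realised by $h^F$ with $R^F = \omega \otimes \mathrm{Id}_F$ for some $(1,1)$-form $\omega$. Taking traces gives $c_1(F, h^F) = \rk{F} \cdot \imun\omega / (2\pi)$, and Griffiths' formula \eqref{eq_griff_formula} applied to a unit vector $u \in F$ reads
\[
c_1(\mathcal{O}_{\mathbb{P}(F^*)}(1), h^{\mathcal{O}})_{[u^*]}
= \frac{\imun \omega}{2\pi} + \omega_{FS, [u^*]}
= \tfrac{1}{\rk{F}} \pi^* c_1(F, h^F) + \omega_{FS, [u^*]}.
\]
Hence $\Lambda_F$ is represented by the fiberwise Fubini--Study form, which is semi-positive.

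\medskip
\noindent\textbf{Hard direction, cohomological vanishing.} Let $\eta \geq 0$ be a smooth semi-positive representative of $\Lambda_F$, and set $r := \rk{F}$, $n := \dim Z$. Using the Segre push-forward identity $\pi_* c_1(\mathcal{O}_{\mathbb{P}(F^*)}(1))^{r-1+j} = s_j(F^*)$, a binomial expansion of $\Lambda_F^r$ retains only the terms $j = 0, 1$ and collapses to
\[
\pi_*\Lambda_F^r = s_1(F^*) - r \cdot \tfrac{1}{r} c_1(F) \cdot s_0(F^*) = c_1(F) - c_1(F) = 0
\]
in $H^{1,1}(Z, \real)$. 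Therefore $\pi_* \eta^r$ is a smooth semi-positive $(1,1)$-form on $Z$ representing the zero class; pairing with the $(n-1)$-th power of a Kähler form $\omega_0$ on $Z$ forces $\pi_*\eta^r \equiv 0$ pointwise on $Z$, and hence the non-negative top form $\eta^r \wedge \pi^*\omega_0^{n-1}$ vanishes identically on $\mathbb{P}(F^*)$.

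\medskip
\noindent\textbf{Kernel foliation, conclusion and main obstacle.} In the block decomposition $\eta = \bigl(\begin{smallmatrix} C & D \\ D^* & E \end{smallmatrix}\bigr)$ adapted to a splitting $T^{1,0}\mathbb{P}(F^*) = \mathrm{hor} \oplus \mathrm{ver}$, the vanishing above is equivalent, wherever the vertical block $E$ is invertible, to the Schur-complement identity $C - D E^{-1} D^* = 0$; this says that $\ker \eta$ is an $n$-dimensional complex distribution complementary to the fibers of $\pi$. Since $\eta$ restricted to any fiber represents the positive class $c_1(\mathcal{O}_{\mathbb{P}^{r-1}}(1))$, the open set $U \subset \mathbb{P}(F^*)$ where $E$ is invertible is non-empty and meets every fiber. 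On $U$, closedness $d\eta = 0$ together with the standard identity for $d$ of a $2$-form yields $\eta([v,w], \cdot) = 0$ for $v, w \in \ker\eta$, so the distribution is involutive and defines a holomorphic foliation of $U$ transverse to $\pi$; equivalently, $\pi|_U$ is a flat $\mathbb{P}^{r-1}$-bundle, which by \cite[Proposition~1.4.22]{KobaVB} is the local form of a projectively flat Hermitian structure on $F|_{\pi(U)}$. The main technical obstacle I foresee is precisely the passage from the open set $U$ to the whole of $\mathbb{P}(F^*)$: I expect it is resolved either by producing a semi-positive representative $\eta$ via Berndtsson's positivity for direct images \cite{BernJDG} that is strictly positive along every fiber (whence $U = \mathbb{P}(F^*)$), or by a Hartogs-type extension argument once the degeneracy locus of $E$ is seen to be of codimension at least two inside $\mathbb{P}(F^*)$.
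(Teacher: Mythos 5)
Your easy direction is correct and is the standard one: by Griffiths' formula (\ref{eq_griff_formula}), a metric with $R^F=\omega\otimes{\rm{Id}}_F$ makes $\Lambda_F$ represented by the fiberwise Fubini--Study form. For the hard direction you also isolate the right first step, namely that $\pi_*\Lambda_F^{\rk{F}}=0$ in cohomology by (\ref{eq_groth_ch}), hence for a semi-positive representative $\eta$ the form $\pi_*\eta^{\rk{F}}$ (and in fact $\pi_*(g\,\eta^{\rk{F}})$ for every function $g$) vanishes identically; this is precisely the vanishing the paper exploits. From there, however, your route diverges and, as you yourself flag, does not close. Two gaps remain. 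First, the Schur-complement/foliation argument lives only on the open set $U$ where $\eta$ is non-degenerate along the fibers; $U$ meets every fiber but need not contain any whole fiber, and neither proposed repair is clearly available: modifying $\eta$ by $\partial\dbar$ of a global potential gives no control that the result stays globally semi-positive while becoming fiberwise positive, and the degeneracy locus of the vertical block is just the zero set of a non-negative smooth function, so there is no reason for it to have codimension two. Second, even granting a global foliation transverse to $\pi$, what you obtain is a flat $\mathbb{P}^{\rk{F}-1}$-bundle, i.e.\ a representation of $\pi_1(Z)$ into $PGL(\rk{F},\comp)$, whereas a projectively flat \emph{Hermitian} structure requires holonomy in $PU(\rk{F})$; the unitary reduction (say via compactness of the isometry group of $\eta$ restricted to a fiber, which the holonomy preserves since $\iota_v\eta=0$ and $d\eta=0$) is not addressed. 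The comparison with Nakayama \cite{NakayamaNef} and \cite{DPSNef} quoted after the theorem shows some genuine extra input is needed: mere nefness of $\Lambda_F$ only yields a filtration with projectively flat quotients.

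The paper's proof bypasses both issues by a different mechanism. It normalizes $h^{\mathcal{O}}$ so that $c_1(\mathcal{O}_{\mathbb{P}(F^*)}(1),h^{\mathcal{O}})=\alpha+\pi^*c_1(F,h^{F}_{0})/\rk{F}$, uses the isomorphism (\ref{eq_f_as_direct_im}) to equip $F$ with the $L^2$-metric associated to the fiberwise volume $\alpha^{\rk{F}-1}$ --- non-degenerate exactly because $\alpha^{\rk{F}-1}$ is non-zero somewhere on each fiber, which is the same weak non-degeneracy your set $U$ provides --- and then invokes Berndtsson's negativity of direct images (\ref{eq_51_thm_2}). Your vanishing $\pi_*(g\,\alpha^{\rk{F}})=0$ then kills the error term in Berndtsson's bound, giving the semi-negativity (\ref{eq_bern_bd_last2}); taking the trace and using compactness of $Z$ forces equality, i.e.\ $R^F=\omega\otimes{\rm{Id}}_F$ directly, with the Hermitian structure built in. If you wish to salvage your outline, the cleanest fix is to replace the foliation step by this curvature inequality.
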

	The proof of Theorem \ref{thm_charac_prfl} is given in the end of this section. Let's put it in in the context of previous works.
	It has been proved by Nakayama in \cite[Theorem A]{NakayamaNef} (and it follows independently from the classification theorem of \textit{numerically flat} vector bundles of  Demailly-Peternell-Schneider \cite[Theorem 1.18]{DPSNef}) that $\Lambda_F$ is \textit{nef} if and only if $F$ admits a filtration by subbundles such that the subsequent quotients of the filtration admit projectively flat Hermitian structures with the same ratio of their first Chern classes to their ranks. Theorem \ref{thm_charac_prfl} says that under a stronger assumption that $\Lambda_F$ is semi-positive, this filtration collapses to a single vector bundle.
	\begin{proof}[Proof of Theorem \ref{thm_sk}]
		Let's first argue that \textit{b)} implies \textit{a)}.
		Indeed, let's assume that $F$ admits a projectively flat Hermitian structure. We fix a Hermitian metric $h^F$ satisfying $\Theta^F = \omega \otimes {\rm{Id}}_F$ for some $(1, 1)$-form $\omega$.
		Then by (\ref{eq_griff_formula}), we see that for the associated metric $h^{\mathcal{O}}$ on the line bundle $\mathcal{O}_{\mathbb{P}(F^*)}(1)$, the horizontal component of the first Chern form satisfies
		\begin{equation}\label{eq_c_1_0_griff}
			c_1(\mathcal{O}_{\mathbb{P}(F^*)}(1), h^{\mathcal{O}})_H = \frac{\pi^* \omega}{2 \pi}.
		\end{equation}
		 We conclude by (\ref{eq_c_1_0_griff}) in the same way as we did in the first part of the proof of Theorem \ref{thm_charact_max}.
		\par 
		Let's prove the opposite direction. 
		First of all, from (\ref{eq_dec_max_char}), we see that the class $c_1(\mathcal{O}_{\mathbb{P}(F^*)}(1)) - \frac{1}{p}\pi^* c_1(L_H)$ is semi-positive.
		By the proof of Theorem \ref{thm_charact_max}, we see that $L_H$ can be chosen so that 
		\begin{equation}\label{eq_15_1}
			c_1(L_H) = \pi_*(c_1(\mathcal{O}_{\mathbb{P}(F^*)}(1))^{\rk{F}}),
		\end{equation}
		and $p := \rk{F}$.
		From Grothendieck approach to Chern classes, we see that
		\begin{equation}\label{eq_groth_ch}
			 \pi_*(c_1(\mathcal{O}_{\mathbb{P}(F^*)}(1))^{\rk{F}})
			 =
			 c_1(F).
		\end{equation}
		From the above, we infer the semi-positivity of $\Lambda_F$. As a consequence, by Theorem \ref{thm_charac_prfl}, $F$ admits a projectively flat Hermitian structure.
		\par 
		Now let's assume that $h^{\mathcal{O}}$ is induced by a Hermitian metric $h^F$ on $F$.
		The foliation condition from Theorem \ref{thm_charac_prfl} implies in the notation of (\ref{eq_c_1_0_griff}) that 
		\begin{equation}\label{eq_fol_cond_ref}
			c_1(\mathcal{O}_{\mathbb{P}(F^*)}(1), h^{\mathcal{O}})_{H} = \frac{1}{p} \pi^* c_1(L_H, h^{L_H}).
		\end{equation}
		But the formula of Griffiths, (\ref{eq_griff_formula}), says that for a given unitary vector $u \in F$, we have 
		\begin{equation}\label{eq_fol_cond_ref211}
			c_1(\mathcal{O}_{\mathbb{P}(F^*)}(1), h^{\mathcal{O}})_{H, [u^*]} = \scal{\frac{\Theta^F}{2 \pi}  u}{u}_{h^F}.
		\end{equation}
		From (\ref{eq_fol_cond_ref}) and (\ref{eq_fol_cond_ref211}), we have
		$
			\frac{\Theta^F}{2 \pi} = \frac{1}{p} c_1(L_H, h^{L_H}) \otimes {\rm{Id}}_F,
		$
		which finishes the proof.
	\end{proof}
	\begin{proof}[Proof of Theorem \ref{thm_charac_prfl}]
		The main idea of the proof is to start from a semi-positive form $\alpha$ in the class $\Lambda_F$ and by using the isomorphism (\ref{eq_sk_iso}) for $k = 1$, construct a Hermitian metric $h^F$ on $F$ through the $L^2$-product, so that $(F, h^F)$ is projectively flat.
		\par
		More precisely, we fix an arbitrary metric $h^{F}_{0}$ on $F$ so that the induced metric on $\det F$ has positive curvature and endow $\mathcal{O}_{\mathbb{P}(F^*)}(1)$ with the metric $h^{\mathcal{O}}$, satisfying
		\begin{equation}\label{eq_51_thm_1}
			c_1(\mathcal{O}_{\mathbb{P}(F^*)}(1), h^{\mathcal{O}}) 
			=
			\alpha 
			+
			\frac{\pi^* c_1(F, h^{F}_{0})}{\rk{F}}.
		\end{equation}
		Note that it is always possible by the $\partial \dbar$-lemma.
		\par 
		We specify (\ref{eq_sk_iso}) for $k = 1$ to get an isomorphism
		\begin{equation}\label{eq_f_as_direct_im}
			F \cong R^0 \pi_* (\mathcal{O}_{\mathbb{P}(F^*)}(1)).
		\end{equation}
		We denote by $h^F$ the $L^2$-metric on $F$, induced by  $h^{\mathcal{O}}$ and the fiberwise volume form obtained by the restriction of $\alpha^{\rk{F}-1}$. Since $\alpha$ is in the class $\Lambda_F$, we have $\pi_* (\alpha^{\rk{F}-1}) = 1$, and the volume form from the previous sentence is non-degenerate at some point of each fiber. Hence the metric $h^F$ is non-degenerate everywhere.
		We claim that $(F, h^F)$ is projectively flat.
		\begin{sloppypar} 
		To see this, we use the result of Berndtsson, \cite[Theorem 3.1]{BernJDG}, on the negativity of direct image bundles of locally trivial fibrations, which states that the curvature $\Theta^F$ of $h^F$ for any $u \in F$ satisfies
		\begin{equation}\label{eq_51_thm_2}
			\scal{\frac{\Theta^F}{2 \pi}  u}{u}_{h^F} \leq \pi_* \Big( h^{\mathcal{O}}(\tilde{u}, \tilde{u}) \cdot c_1(\mathcal{O}_{\mathbb{P}(F^*)}(1), h^{\mathcal{O}}) \wedge \alpha^{\wedge(\rk{F}-1)} \Big),
		\end{equation}
		where $\tilde{u}$ is an element of $R^0 \pi_* (\mathcal{O}_{\mathbb{P}(F^*)}(1))$, associated to $u$ by the isomorphism (\ref{eq_f_as_direct_im}).
		Remark that in \cite[Theorem 3.1]{BernJDG} it is required that the relative volume form is induced by $c_1(\mathcal{O}_{\mathbb{P}(F^*)}(1), h^{\mathcal{O}})$, but by following the argument in the proof of \cite[Theorem 3.1]{BernJDG} line by line, we see that it works in our situation as well, as the only thing which is used by Berndtsson is the closedness of $\alpha$.
		\end{sloppypar}		
		\par 
		From (\ref{eq_51_thm_1}), (\ref{eq_51_thm_2}) and the definition of the $L^2$-scalar product, we have
		\begin{equation}\label{eq_bern_bd_last}
			\Big\langle \Big( \frac{\Theta^F}{2 \pi} - \frac{\pi^* c_1(F, h^{F}_{0}) \otimes {\rm{Id}}_F}{\rk{F}} \Big)  u, u \Big\rangle_{h^F} 
			\leq 
			\pi_* \big( h^{\mathcal{O}}(\tilde{u}, \tilde{u}) \cdot \alpha^{\wedge \rk{F}} \big)
			.
		\end{equation}
		Now, from (\ref{eq_groth_ch}) and the fact that $\alpha$ is in the class $\Lambda_F$, the De Rham cohomology of the form $\pi_* ( \alpha^{\wedge \rk{F}})$ is zero. 
		By the semi-positivity of $\alpha$, the form $\pi_* ( \alpha^{\wedge \rk{F}})$ is zero itself, and, moreover, for any function $g : \mathbb{P}(F^*) \to \real$,  we have $\pi_*( g \cdot \alpha^{\wedge \rk{F}}) = 0$. This with (\ref{eq_bern_bd_last}) implies 
		\begin{equation}\label{eq_bern_bd_last2}
			\Big\langle \Big( \frac{\Theta^F}{2 \pi} - \frac{\pi^* c_1(F, h^{F}_{0}) \otimes {\rm{Id}}_F}{\rk{F}} \Big)  u, u \Big\rangle_{h^F} 
			\leq 
			0
			.
		\end{equation}
		\par 
		By taking the trace of the left-hand side of (\ref{eq_bern_bd_last2}), we get $c_1(F, h^F)  - c_1(F, h^{F}_{0}) \leq 0$. But then we have $c_1(F, h^F)  = c_1(F, h^{F}_{0})$ by the compactness of $Z$. This implies that the left-hand side of (\ref{eq_bern_bd_last2}) is actually equal to zero. This finishes the proof, as it gives us $\frac{\Theta^F}{2 \pi} = \frac{\pi^* c_1(F, h^{F}_{0})}{\rk{F}} \otimes {\rm{Id}}_F$.
	\end{proof}

\appendix 
\section{Spectral measure of Toeplitz operators, a proof of Theorem \ref{thm_bout_guil}}\label{app_toepl_sp}
The main goal of this section is to give a proof of Theorem \ref{thm_bout_guil}.
\par 
The original proof of Boutet de Monvel-Guillemin from \cite{BoutGuillSpecToepl} passes through the proof of so-called trace formula and it relies heavily on  the theory of Fourier integral operators.
	Our proof is based only on the asymptotic expansion of the Bergman kernel (which can be established independently of \cite{BoutGuillSpecToepl}, see Dai-Liu-Ma \cite{DaiLiuMa}).
	Note, however, that this asymptotic  expansion was proved after \cite{BoutGuillSpecToepl} (and some earlier proofs of it were influenced by \cite{BoutGuillSpecToepl}, see Zelditch \cite{ZeldBerg}).
	\par 
	Let's first review the Bergman kernel expansion.
	Let $X$ be a complex manifold of dimension $m$ and let $L$ be an ample line bundle over $X$ endowed with a Hermitian metric $h^L$ of positive curvature. 
	As before, we denote $\omega := \frac{\Theta^L}{2\pi}$.
	Let $(G, h^G)$ be a Hermitian vector bundle on $X$.
	Let
	\begin{equation}
	P_k(x, x') \in (L^{\otimes k} \otimes G)_x \otimes (L^{\otimes k} \otimes G)^*_{x'}, \qquad x, x' \in X,
	\end{equation}
	be the Schwartz kernel (also called the Bergman kernel) of (\ref{eq_br_proj}) with respect to the volume form $\frac{\omega^{\wedge m}}{m!}$.
	For simplicity, assume that the volume form in the $L^2$-product (\ref{eq_l2_prod}) is also induced by $\frac{\omega^{\wedge m}}{m!}$.
	\begin{prop}[{Dai-Liu-Ma \cite[Proposition 4.1]{DaiLiuMa}, cf. Ma-Marinescu \cite[Proposition 4.1.5]{MaHol}}]\label{prop_off_diag_rough}
		For any $\epsilon > 0$, $p, l \in \nat^*$, there is $C > 0$ such that for any $k \in \nat$, $x, x' \in X$, $\dist(x, x') > \epsilon$, we have
		\begin{equation}
			|P_k(x, x')|_{\ccal^{l}} \leq C k^{-p}.
		\end{equation}
	\end{prop}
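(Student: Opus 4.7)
The plan is to combine the spectral gap of the Kodaira Laplacian with the finite propagation speed of the associated wave equation, following the approach developed by Dai-Liu-Ma \cite{DaiLiuMa}, cf.\ Ma-Marinescu \cite[\S 4]{MaHol}. First I would introduce the Dirac-type operator $D_k := \sqrt{2}(\dbar + \dbar^*)$ acting on $(L^{\otimes k} \otimes G)$-valued $(0, *)$-forms on $X$, so that $D_k^2 = 2 \Box_k$ is twice the Kodaira Laplacian. By the Kodaira vanishing theorem, for $k$ large enough $\ker D_k = H^0(X, L^{\otimes k} \otimes G)$, so the Bergman projection $P_k$ is precisely the orthogonal projection onto $\ker D_k$. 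The key analytical input is a uniform spectral gap obtained from the Bochner-Kodaira-Nakano inequality: there are constants $c > 0$ and $k_0 \in \nat$ such that the non-zero spectrum of $D_k$ lies outside $(-\sqrt{ck}, \sqrt{ck})$ for every $k \geq k_0$.

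Next, I would fix an even Schwartz function $\phi \in \mathscr{S}(\real)$ with $\phi(0) = 1$ whose Fourier transform $\hat \phi$ is smooth and compactly supported in $(-\epsilon_0, \epsilon_0)$ with $\epsilon_0 < \epsilon$; such a $\phi$ exists as the Fourier transform of a bump. Via the functional calculus,
\begin{equation*}
\phi(D_k) = \frac{1}{2 \pi} \int_{\real} \hat \phi(t) \cos(t D_k)\, dt,
\end{equation*}
and the finite propagation speed of $\cos(t D_k)$ -- whose Schwartz kernel vanishes whenever $\dist(x, x') > |t|$ -- force the Schwartz kernel of $\phi(D_k)$ to be supported in $\{ \dist(x, x') \leq \epsilon_0 \}$. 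Consequently, for every pair $(x, x')$ with $\dist(x, x') > \epsilon$, one has $P_k(x, x') = - \big(\phi(D_k) - P_k\big)(x, x')$, reducing the problem to estimating the Schwartz kernel of the smoothing operator $\phi(D_k) - P_k$.

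The hard part, as I see it, is upgrading $L^2$-operator bounds to pointwise $\ccal^l$-bounds on this kernel. The $L^2$-bound itself is immediate: writing $\phi(D_k) - P_k$ through the spectral decomposition of $D_k$, the Schwartz decay of $\phi$ together with the spectral gap yields $\| \phi(D_k) - P_k \|_{L^2 \to L^2} = O(k^{-N})$ for every $N \in \nat$. To convert this into a pointwise estimate, I would multiply $\phi(D_k) - P_k$ by sufficiently high powers of $(I + \Box_k)$ on either side, observing that the resulting operator norm is still $O(k^{-N})$ for any $N$ because the polynomial-in-$\lambda$ factors are absorbed by the Schwartz decay of $\phi$ on the non-zero spectrum of $D_k$. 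Standard elliptic regularity for $\Box_k$ together with Sobolev embedding on the compact manifold $X$ then transfer these high-order $L^2$-bounds into the claimed $\ccal^l$-bound on $P_k(x, x')$ for $\dist(x, x') > \epsilon$.
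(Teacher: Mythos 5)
Your argument is correct and is precisely the spectral-gap plus finite-propagation-speed proof given in the cited references (Dai--Liu--Ma, cf.\ Ma--Marinescu \S 4.1); the paper itself only quotes this proposition without proof, so there is nothing to diverge from. The one point worth making explicit in your last step is that the elliptic/Sobolev constants for $\Box_k$ depend on $k$ (the curvature of $L^{\otimes k}$ grows linearly in $k$), but they do so only polynomially, which is absorbed by the $O(k^{-N})$ operator-norm bound valid for every $N$.
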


	Another theorem from \cite{DaiLiuMa} gives a very precise estimate for the Bergman kernel near the diagonal.
	To state it, for $x \in X$, we implicitly identity $T_x X$ and a neighborhood of $x \in X$ by means of the exponential mapping associated to the Kähler form $\omega$.
	We denote by $\kappa$ a function, defined in the neighborhood of $0$ in $T_x X$, which describes the defect between the volume form on $T_x X$ induced by $\omega_x$ and the pull-back using the exponential map of the volume form induced by $\omega$ on $X$.
	As the derivative of the exponential map at $0$ is identity, $\kappa(0) = 1$.
	\begin{thm}[{Dai-Liu-Ma \cite[Theorem 4.1.18']{DaiLiuMa}, cf. Ma-Marinescu \cite[Theorem 4.2.1, (4.1.84), (4.1.92)]{MaHol}}]\label{thm_dai_liu_ma}
		For certain $c, C, M > 0$ and any $x \in X$, $Z, Z' \in T_x X$, the following bound holds
		\begin{multline}\label{eq_dai_liu_ma}
			\Big| 
				P_k(Z, Z') - k^m \exp \big( - \pi k | Z - Z' |^2/2 + \pi k \scal{Z}{Z'} \big) \kappa(Z)^{-1/2} \kappa(Z')^{-1/2}
			\Big|
			\\
			\leq
			C k^{m-1}
			(1 + \sqrt{k}|Z| + \sqrt{k}|Z'|)^M 
			\exp \big(
				-c \sqrt{k} |Z - Z'|
			\big)
			+ 
			C k^{m-1}.
		\end{multline}
	\end{thm}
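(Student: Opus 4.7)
The goal is to reproduce, with a transparent outline, the Dai–Liu–Ma near-diagonal expansion for the Bergman kernel with the explicit remainder claimed. The plan is to localize the analysis near the diagonal, rescale to the model Bargmann–Fock picture at each point $x \in X$, and then extract the leading kernel by a resolvent/spectral-gap comparison with a model harmonic-oscillator-type operator on $T_xX$.

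\textbf{Step 1: Localization.} Proposition \ref{prop_off_diag_rough} already yields that $P_k(x,x') = O(k^{-\infty})$ when $\dist(x,x') > \epsilon$. This reduces the problem to $Z, Z' \in T_xX$ of length bounded by some fixed small $\epsilon_0$, within which the exponential map around $x$ is a diffeomorphism onto its image. For $|Z-Z'| \geq \epsilon_0$, the claimed bound is trivial from Proposition \ref{prop_off_diag_rough} since $\exp(-c\sqrt{k}\epsilon_0) \cdot k^{m-1}$ dominates $O(k^{-\infty})$; so only $|Z-Z'| \leq \epsilon_0$ matters.

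\textbf{Step 2: Trivialization and rescaling.} Using parallel transport along radial rays for $L$, $G$ and $TX$, one trivializes all bundles on the geodesic ball $B(x,\epsilon_0)$. The operator $\tfrac{1}{k}\Box_k$ (Kodaira Laplacian on $L^{\otimes k}\otimes G$) becomes an elliptic operator on sections of a trivial bundle over $T_xX$. Rescale $Z \mapsto Z/\sqrt{k}$ and conjugate by the appropriate weight $k^{-m/2}$: the rescaled operator $\mathcal{L}_k$ is, by Taylor expansion of the metric, the connection 1-form of $h^L$, and $\omega$ at $x$, of the form
\begin{equation}\label{eq_plan_rescale}
  \mathcal{L}_k = \mathcal{L}_0 + \sum_{r\geq 1} k^{-r/2}\mathcal{O}_r,
\end{equation}
where $\mathcal{L}_0$ is the model harmonic oscillator on $(\mathbb{C}^m, \omega_x)$ associated to the constant curvature at $x$, and the $\mathcal{O}_r$ are polynomial-coefficient differential operators in a fixed Sobolev scale. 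Correspondingly, $P_k$ rescales to $\mathcal{P}_k$, the spectral projector of $\mathcal{L}_k$ onto its kernel.

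\textbf{Step 3: The model kernel.} The operator $\mathcal{L}_0$ is well known (a Landau/Bargmann–Fock Hamiltonian): its kernel, realized as an $L^2$ space with Gaussian weight, is the classical Bargmann–Fock space, and its reproducing kernel is exactly
\begin{equation}\label{eq_plan_model}
  \mathcal{P}_0(Z,Z') = \exp\!\bigl(-\tfrac{\pi}{2}|Z-Z'|^2 + \pi\scal{Z}{Z'}\bigr),
\end{equation}
which is precisely the explicit factor appearing in the statement. The factors $\kappa(Z)^{-1/2}\kappa(Z')^{-1/2}$ come from the change between the fixed Euclidean volume on $T_xX$ used for $\mathcal{L}_0$ and the pulled-back Riemannian volume on $X$ used in defining $P_k$.

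\textbf{Step 4: Spectral-gap and resolvent comparison.} The Bochner–Kodaira–Nakano formula, combined with the positivity of $c_1(L,h^L)$, yields a spectral gap $\mathrm{Spec}(\Box_k)\subset\{0\}\cup[\tau k,\infty)$ for some $\tau>0$ and $k$ large; after rescaling this gives a fixed gap $[c_0,\infty)$ for $\mathcal{L}_k\setminus\{0\}$. Hence both $\mathcal{P}_k$ and $\mathcal{P}_0$ can be written as contour integrals
\[
  \mathcal{P}_* = \frac{1}{2\pi \imun}\oint_{\Gamma}(\lambda-\mathcal{L}_*)^{-1}d\lambda,
\]
with a common small circle $\Gamma$ around $0$ avoiding the gap. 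Subtracting, one gets $\mathcal{P}_k-\mathcal{P}_0 = \frac{1}{2\pi\imun}\oint_\Gamma (\lambda-\mathcal{L}_k)^{-1}(\mathcal{L}_k-\mathcal{L}_0)(\lambda-\mathcal{L}_0)^{-1}d\lambda$, which, together with (\ref{eq_plan_rescale}), produces an $O(k^{-1/2})$ operator-level error in a suitable weighted Sobolev space $H^s_{\mathrm{w}}(T_xX)$ with polynomial weights $(1+|Z|)^N$.

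\textbf{Step 5: Pointwise Gaussian decay of the remainder.} The core analytic input — and the main obstacle — is to upgrade the operator-norm estimate on $\mathcal{P}_k-\mathcal{P}_0$ to the pointwise kernel estimate with the factor $\exp(-c\sqrt{k}|Z-Z'|)$ on the un-rescaled scale. For this I would use Agmon-type weighted estimates: conjugate the resolvent equation by $e^{c|Z-Z'|}$, exploit that the symbol of $\mathcal{L}_k$ is elliptic with quadratic growth (so the commutator $[\mathcal{L}_k, e^{c|Z|}]$ is controlled by $\mathcal{L}_k$ itself for small $c$), and iterate. Combining Sobolev embedding with these weighted resolvent bounds gives pointwise control of the kernel of $\mathcal{P}_k-\mathcal{P}_0$ by $k^{-1/2}(1+|Z|+|Z'|)^M e^{-c|Z-Z'|}$ on the rescaled scale, which after rescaling back becomes the claimed bound $Ck^{m-1}(1+\sqrt{k}|Z|+\sqrt{k}|Z'|)^M\exp(-c\sqrt{k}|Z-Z'|)$. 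The additive $Ck^{m-1}$ term absorbs the contribution of the cutoff used in Step 1 and a uniform Taylor truncation error in $x$. The main technical difficulty is keeping every estimate uniform in $x \in X$, which is handled by compactness of $X$ and the fact that all Taylor coefficients in (\ref{eq_plan_rescale}) depend smoothly on $x$.
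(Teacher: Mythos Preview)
The paper does not prove this theorem at all: it is stated as a cited result from Dai--Liu--Ma \cite{DaiLiuMa} and Ma--Marinescu \cite{MaHol}, with no accompanying argument, and is then used as a black box in the proof of Theorem~\ref{thm_bout_guil}. So there is no ``paper's own proof'' to compare against.

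That said, your outline is a faithful sketch of the actual argument in the cited references. The five steps you describe --- off-diagonal localization, radial-gauge trivialization and $Z\mapsto Z/\sqrt{k}$ rescaling, identification of the model kernel as the Bargmann--Fock reproducing kernel, the spectral-gap/contour-integral comparison of $\mathcal{P}_k$ with $\mathcal{P}_0$, and finally Agmon-type weighted estimates to get pointwise sub-Gaussian decay --- are exactly the architecture of \cite[\S4]{DaiLiuMa} and \cite[\S4.1--4.2]{MaHol}. One small caveat: in Step~4 the first-order resolvent identity only gives $O(k^{-1/2})$, whereas the stated remainder is $O(k^{-1})$ relative to the leading $k^m$; to reach $k^{m-1}$ one must observe (as in \cite{MaHol}) that the $k^{-1/2}$ term in the expansion has an \emph{odd} polynomial coefficient and hence vanishes after the appropriate symmetrization, or else iterate the resolvent identity once more. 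With that adjustment your plan matches the literature.
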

	\begin{proof}[Proof of Theorem \ref{thm_bout_guil}]
		By Stone-Weierstrass theorem, it is enough to establish (\ref{eq_bout_guil}) only for $g(x) = x^l$, $l \in \nat$. 
		Clearly, (\ref{eq_bout_guil}) holds for $g(x) = 1$.
		To see that it holds for $g(x) = x$, it is enough to interpret the sum on the left-hand side of (\ref{eq_bout_guil}) through the trace of the operator $T_{f, k}$ and to use (\ref{eq_dai_liu_ma}) for $Z, Z' = 0$. We leave this simple verification for the reader.
		\par 
		Now, for $g(x) = x^l$, $l \geq 2$, it is necessary to use (\ref{eq_dai_liu_ma}) for nonzero $Z, Z'$.
		To simplify the exposition, we will only verify (\ref{eq_bout_guil}) for $g(x) = x^2$, as higher powers are treated in a completely analogous way, but the notation becomes more involved.
		First of all, we have
		\begin{equation}\label{eq_spec_0}
			\sum_{\lambda \in \spec(T_{f, k})} \lambda^2 
			=
			 \tr{T_{f, k}^{2}}.
		\end{equation}
		By definition of $T_{f, k}$ in (\ref{eq_tfk_defn}), we may rewrite
		\begin{equation}\label{eq_spec_2}
			\tr{T_{f, k}^{2}} =
			\int_{x \in X} \int_{y \in X}
			f(x)f(y)
			P_k(x, y) \cdot
			P_k(y, x)
			\frac{\omega^{\wedge m}(x)}{m!}
			\frac{\omega^{\wedge m}(y)}{m!}.
		\end{equation}
		Now, by Proposition \ref{prop_off_diag_rough}, the asymptotic expansion of the right-hand side of (\ref{eq_spec_2}) localizes near the diagonal $\{ x = y \}$.
		Thus, for a given $\epsilon > 0$, we may discard the integration over the pairs $x, y \in X$, for which $\dist(x, y) > \epsilon$.
		But for the pairs $x, y \in X$, satisfying $\dist(x, y) < \epsilon$, we may apply Theorem \ref{thm_dai_liu_ma} to study (\ref{eq_spec_2}).
		By discarding the lower order terms (as those introduced by Taylor expansions of $\kappa - 1$ or $f(x) - f(y)$), we get
		\begin{multline}\label{eq_spec_22}
			\int_{x \in X} \int_{y \in X}
			f(x)f(y)
			P_k(x, y) \cdot
			P_k(y, x)
			\frac{\omega^{\wedge m}(x)}{m!}
			\frac{\omega^{\wedge m}(y)}{m!}
			\sim
			k^{2m} 
			\cdot
			\int_{x \in X}
			f(x)^2
			\frac{\omega^{\wedge m}(x)}{m!}
			\cdot
			\\
			\cdot
			\int_{Z \in \real^{2m}} 
			\exp(- \pi k |Z|^2)
			 dZ_1 \wedge d Z_2 \wedge \ldots \wedge  d Z_{2m}.
		\end{multline}
		By the formula $\int_{(x, y) \in \real^2} \exp(- \pi (x^2 + y^2)) dx \wedge d y = 1$, (\ref{eq_spec_2}) and (\ref{eq_spec_22}), we see that
		\begin{equation}\label{eq_spec_3}
			\tr{T_{f, k}^{2}}
			\sim
			k^m
			\cdot
			\int_{x \in X} 
			f(x)^2
			\frac{\omega^{\wedge m}(x)}{m!}.
		\end{equation}
		By (\ref{eq_nk_asymp}), (\ref{eq_spec_0}) and (\ref{eq_spec_3}), we conclude.
	\end{proof}

\bibliography{bibliography}

\begin{thebibliography}{10}

\bibitem{BeauvilleSplit}
A.~Beauville.
\newblock {\em Complex manifolds with split tangent bundle}, pages 61 -- 70.
\newblock in Complex Analysis and Algebraic Geometry, De Gruyter, Berlin,
  Boston, 2000.

\bibitem{BerndAnnMath}
B.~Berndtsson.
\newblock {Curvature of Vector Bundles Associated to Holomorphic Fibrations}.
\newblock {\em Ann. of Math.}, 169(2):531--560, 2009.

\bibitem{BernJDG}
B.~Berndtsson.
\newblock {Positivity of direct image bundles and convexity on the space of
  K{\"a}hler metrics}.
\newblock {\em J. Diff. Geom.}, 81(3):457--482, 2009.

\bibitem{BordMeinSchli}
M.~Bordemann, E.~Meinrenken, and M.~Schlichenmaier.
\newblock {Toeplitz quantization of K{\"a}hler manifolds and $gl(N), N \to
  \infty$ limits}.
\newblock {\em Comm. Math. Phys.}, 165:281--296, 1994.

\bibitem{Bouche}
T.~Bouche.
\newblock {Sur les in{\'e}galit{\'e}s de Morse holomorphes lorsque la courbure
  du fibr{\'e} en droites est d{\'e}g{\'e}n{\'e}r{\'e}e}.
\newblock {\em {Ann. Sc. Norm. Super. Pisa}}, 18(4):501--523, 1991.

\bibitem{DaiLiuMa}
X.~Dai, K.~Liu, and X.~Ma.
\newblock On the asymptotic expansion of bergman kernel.
\newblock {\em J. Diff. Geom.}, 72(1):1--41, 2006.

\bibitem{DeligDet}
P.~Deligne.
\newblock {Le d{\'{e}}terminant de la cohomologie}.
\newblock {\em Contemp. Math.}, 67:93--177, 1987.

\bibitem{DemSingHermMetr}
J.-P. Demailly.
\newblock Singular hermitian metrics on positive line bundles.
\newblock In {\em Complex Algebraic Varieties}, pages 87--104, Berlin,
  Heidelberg, 1992. Springer Berlin Heidelberg.

\bibitem{DemaillyAnalMeth}
J.-P. Demailly.
\newblock {\em {Analytic methods in algebraic geometry}}.
\newblock Somerville, MA: International Press; Beijing: Higher Education Press,
  2012.

\bibitem{DemCompl}
J.-P. Demailly.
\newblock {\em {Complex Analytic and Differential Geometry}}.
\newblock 2012.

\bibitem{DemailluHYMGriff}
J.-P. Demailly.
\newblock {Hermitian-Yang-Mills approach to the conjecture of Griffiths on the
  positivity of ample vector bundles, arXiv: 2002.02677}.
\newblock 2020.

\bibitem{DPSNef}
J.-P. {Demailly}, T.~{Peternell}, and M.~{Schneider}.
\newblock {Compact complex manifolds with numerically effective tangent
  bundles.}
\newblock {\em {J. Algebr. Geom.}}, 3(2):295--345, 1994.

\bibitem{GriffPosVect}
P.~A. Griffiths.
\newblock {\em {Hermitian differential geometry, Chern classes, and positive
  vector bundles}}, pages 185 -- 252.
\newblock {Princeton University Press}, Princeton, 2015.

\bibitem{HartsAmple}
R.~Hartshorne.
\newblock Ample vector bundles.
\newblock {\em Publ. Math. IHES}, 29:63--94, 1966.

\bibitem{IoosBTSympl}
L.~{Ioos}.
\newblock {On the composition of Berezin-Toeplitz operators on symplectic
  manifolds.}
\newblock {\em {Math. Z.}}, 290(1-2):539--559, 2018.

\bibitem{KobaVB}
S.~Kobayashi.
\newblock {\em {Differential Geometry of Complex Vector Bundles}}.
\newblock {Princeton University Press}, Princeton, 2014.

\bibitem{BoutGuillSpecToepl}
{L. Boutet de Monvel} and V.~Guillemin.
\newblock {\em {The Spectral Theory of Toeplitz Operators}}.
\newblock Princeton University Press, 1981.

\bibitem{MaHol}
X.~Ma and G.~Marinescu.
\newblock {\em Holomorphic Morse inequalities and Bergman kernels}, volume 254
  of {\em Progr. Math.}
\newblock Birkh{\"a}user Verlag Basel, 2007.

\bibitem{MaMarToepl}
X.~{Ma} and G.~{Marinescu}.
\newblock {Toeplitz operators on symplectic manifolds.}
\newblock {\em {J. Geom. Anal.}}, 18(2):565--611, 2008.

\bibitem{MaMarBTKah}
X.~{Ma} and G.~{Marinescu}.
\newblock {Berezin-Toeplitz quantization on K\"ahler manifolds.}
\newblock {\em {J. Reine Angew. Math.}}, 662:1--56, 2012.

\bibitem{MaZhangSuperconn}
X.~Ma and W.~Zhang.
\newblock {Superconnection and family Bergman kernels}.
\newblock {\em C. R. Math. Acad. Sci. Paris}, 344(1):41 -- 44, 2007.

\bibitem{NakanoPos}
S.~Nakano.
\newblock {On complex analytic vector bundles.}
\newblock {\em {J. Math. Soc. Japan}}, 7(1):1--12, 1955.

\bibitem{NakayamaNef}
N.~Nakayama.
\newblock {Normalized Tautological Divisors Of Semi-Stable Vector Bundles}.
\newblock 1999.

\bibitem{SchliBer}
M.~Schlichenmaier.
\newblock {Berezin-Toeplitz quantization and Berezin transform.}
\newblock In {\em {Long time behaviour of classical and quantum systems
  (Bologna, 1999)}}, pages 271--287. World Sci. Publ., 2001.

\bibitem{ZeldBerg}
S.~Zelditch.
\newblock {Szeg{\"o} kernels and a theorem of Tian}.
\newblock {\em Internat. Math. Res. Notices}, 1998(6):317--331, 1998.

\bibitem{ZhangDelProd}
S.~Zhang.
\newblock Heights and reductions of semi-stable varieties.
\newblock {\em Compos. Math.}, 104(1):77--105, 1996.

\end{thebibliography}

		\bibliographystyle{abbrv}

\Addresses

\end{document}